\newcommand{\NN}{\mathbb{N}}	
\newcommand{\ZZ}{\mathbb{Z}}
\newcommand{\T}{\mathrm{T}}
\newcommand{\Dom}{\mathcal{D}}
\newcommand{\qUC}{\mathrm{qUC}}
\newcommand{\sfUC}{\mathrm{sfUC}}
\newcommand{\equi}{S}
\newcommand{\Cac}{F}
\newcommand{\Op}{\mathcal{H}}
\newcommand{\RE}{\operatorname{Re}}
\newcommand{\diver}{\operatorname{div}}
\newcommand{\supp}{\operatorname{supp}}
\newcommand{\Ran}{\operatorname{Ran}}
\renewcommand{\epsilon}{\varepsilon}
\newtheorem{theorem}{Theorem}
\newtheorem{lemma}[theorem]{Lemma}
\newtheorem{conjecture}[theorem]{Conjecture}
\theoremstyle{definition}
\newtheorem{definition}[theorem]{Definition}
\theoremstyle{remark}
\newtheorem{remark}[theorem]{Remark}
\begin{document}
%
%
%
%
%
%
\title{Scale-free quantitative unique continuation and equidistribution estimates for solutions of elliptic differential equations}
\author{Denis Borisov}
\affil{Institute of Mathematics, Ufa Scientific Center, Russian Academy of Sciences, 
Chernyshevskii str., 112, Ufa, Russia, 450077,
\& Bashkir State Pedagogical University,
October rev. st., 3a, Ufa, Russia, 450000,
\&
University of Hradec Kr\'alov\'e, Rokitansk\'eho 62, Hradec Kr\'alov\'e 50003,Czech Republic}
\author{Martin Tautenhahn}
\affil{Friedrich-Schiller-Universit\"at Jena, 07743 Jena, Germany \& Technische Universit\"at Chemnitz, 09126 Chemnitz, Germany}
\author{Ivan Veseli\'c}
\affil{Technische Universit\"at Dortmund, 44227 Dortmund, Germany}
\date{\vspace{-5ex}}
\maketitle
\begin{abstract}
We consider elliptic differential operators on either the 
entire
Euclidean space $\mathbb{R}^d$
or on subsets consisting of a cube $\Lambda_L$ of integer length $L$.
For eigenfunctions of the operator, and more general solutions of elliptic differential equations, we derive several quantitative unique continuation results.
The first result is  of local nature and estimates the vanishing order of a solution.
The second is a sampling result and compares the $L^2$-norm of a solution over a union of equidistributed $\delta$-balls in space with the
$L^2$-norm on the entire space. In the case where the space $\mathbb{R}^d$ is replaced by a finite cube $\Lambda_L$ we derive similar estimates.
A particular feature of our bound is that they are uniform as long as the coefficients of the operator are chosen from an appropriate ensemble,
they are quantitative and explicit with respect to the radius $\delta$, they are $L$-independent and stable under small shifts of the $\delta$-balls. Our proof applies to second order terms which have slowly varying coefficients on the relevant length scale.
The results can be also interpreted as special cases of \emph{uncertainty relations},
\emph{observability estimates}, or \emph{spectral inequalities}.
\end{abstract}
%
%
%
%
%
%
%
\section{Introduction}
\emph{How much can the amplitude of a function oscillate over a domain? How unevenly can the mass of the function be concentrated on different regions in space?}
This is -- phrased very sketchily -- the question we study in this paper. The functions considered are solutions of elliptic partial differential inequalities in various domains.
We are interested in a-priori estimates, 
which do not depend on the individual function considered, and are uniform with respect to (certain) variations of the domain and the variable coefficients of the elliptic operator. We measure the oscillations of the amplitude in terms of local $L^2$-norms.
There are three types of results:
\begin{enumerate}[(a)]
 \item a \emph{quantitative unique continuation principle} (or \emph{vanishing order estimate}) for solutions of variable coefficient elliptic partial differential equations, or inequalities,
 \item a \emph{sampling theorem} for solutions of variable coefficient elliptic partial differential inequalities on the entire Euclidean space $\mathbb{R}^d$, and
 \item an \emph{equidistribution theorem} for solutions of variable coefficient elliptic partial differential inequalities on cubes in $\mathbb{R}^d $ of odd integer length $L$.
\end{enumerate}
The first result concerns the vanishing order of a function and is in this sense local. However, as it is well known, global restrictions on the class of functions considered have a strong influence on the order of vanishing, cf.~\cite{DonnellyF-88, TaeuferTV-16}.
\par
The last two results (b) \& (c) are closely related. The sampling theorem can be understood as a version of the equidistribution theorem in the case where the side length of the cube is infinite and thus it equals the whole of $\mathbb{R}^d$.
Moreover, we derive similar estimates for linear combinations of (generalized) eigenfunctions of an elliptic partial differential operator, as long as the corresponding eigenvalues are sufficiently close together. Such functions obviously do not need to be solutions of a partial differential equation.
\par
We were led to derive such estimates motivated by our  previous studies of periodic or disordered physical systems modeled by partial differential operators.
The  coefficients of these operators are either periodic or stochastically homogeneous.  In \cite{Rojas-MolinaV-13} and \cite{Klein-13}, see also \cite{NakicTTV-15,NakicTTV-16-arxiv} for more general statements, such results have been derived for Schr\"odinger operators with periodic, quasiperiodic or random potentials. In this context they are a versatile tool for spectral analysis and can be exploited to establish
\emph{Anderson localization} for certain random models where this was not possible before. However, bounds of the type
(a), (b), and (c) above appear also in other contexts. For instance in \emph{quantum ergodicity} one is interested in delocalization and equidistribution properties of eigenfunctions \cite{BrooksML-15,AnantharamanM-15,Zelditch-92,BoechererSS-03},
in \emph{control theory} observability estimates and spectral inequalities play an important role, e.g.\ to estimate the control cost \cite{LebeauR-95,RousseauL-12},
on manifolds one studies the \emph{vanishing order of eigenfunctions} of the Laplace-Beltrami or a Schr\"odinger operator, cf.~\cite{DonnellyF-88,JerisonL-99,Kukavica-98,Bakri-13}. Finally, since our
theorems can be viewed as scale-uniform quantitative uncertainty principles for certain low dimensional subspaces, there is a relation to uniform uncertainty principles in
\emph{compressive sensing} as well.
We have no space to elucidate and dwell on these relations here, but refer to the survey paper \cite{TaeuferTV-16} for a detailed discussion.
\par
For a result which is already established for the Laplace operator one might wonder whether there is a straightforward extension to variable coefficient elliptic partial differential operators.
Indeed, for the questions at hand, if only the zero order term (interpreted as the potential) contains variable coefficients one can accommodate even local singularities, as demonstrated in  \cite{KleinT-16}. However, if the second order part has variable coefficients, the situation is different. While we can use the proof strategies of \cite{Rojas-MolinaV-13} and \cite{Klein-13}, the key tool, namely a Carleman estimate which holds for the Laplacian (or a Schr\"odinger operator) does not hold verbatim for variable coefficient operators. If one is striving for an optimal type of a Carleman estimate in the sense of \cite{EscauriazaV-03} or \cite{BourgainK-05} one cannot use simply the Carleman weight function of the Laplacian for other elliptic partial differential operators.
Rather, depending on Lipschitz and ellipticity constants of the
variable coefficients, one has to choose an adapted weight function. This has been observed in \cite{EscauriazaV-03} and quantitatively implemented in \cite{NakicRT-15-arxiv}. The latter refinement turns out to be crucial for the application in this note.
This leads to the condition in our theorems, that the coefficients are only allowed to vary slowly on the length scale which is determined by the equidistributed set. One way to satisfy this condition is to choose a dense sampling rate, another one to choose the Lipschitz constant of the coefficients in the partial differential equation sufficiently small.
It seems that with the existing Carleman estimates it is not possible to derive better results, cf.\, Remark
\ref{r:limitations} for more details.
\par
The rest of the paper is organized in the following way:
To illustrate our new results we resort to a comparison, and first consider the more transparent,
but simpler case of Schr\"odinger equations.
In the following section we state our new results, which are divided in two groups according to items (b), and (c) above.
In \S \ref{ss:qUC}
we spell out the main tool of our proof, namely a quantitative unique principle.
It corresponds to item (a) in the list above.
The remainder of Section 3 contains the proofs of the theorems in Section \ref{s:results},
whereas some technical aspects are deferred to an appendix.
On the technical level an interesting part of the paper will be the Remarks
\ref{rem:constant} and \ref{r:limitations}, where we discuss the innovations and limitations of our theorems and our approach.
\subsection*{Benchmark: Schr\"odinger operators}
The new results in the present paper are best understood when compared to what was recently established for the special case of (stationary) Schr\"odinger equations.
In this case only the zero order term is variable and the results are simpler to formulate, which we do next.
\par
For $L > 0$ we denote by $\Lambda_L = (-L/2 , L/2)^d \subset \mathbb{R}^d$ the cube with side length $L$, and by $\Delta_L$ the Laplace operator on $L^2 (\Lambda_L)$ subject to either Dirichlet, Neumann, or periodic boundary conditions.
Moreover, for a measurable and bounded $V : \mathbb{R}^d \to \mathbb{R}$ we denote by $V_L : \Lambda_L \to \mathbb{R}$ its restriction to $\Lambda_L$ given by $V_L (x)  = V (x)$ for $x \in \Lambda_L$, and by
\[
h_L = -\Delta_L + V_L \quad \text{on} \quad L^2 (\Lambda_L)
\]
the corresponding Schr\"odinger operator. For $\Omega \subset \mathbb{R}^d$ open and $\psi \in L^2 (\Omega)$ we denote by $\lVert \psi \rVert = \lVert \psi \rVert_\Omega$ the usual $L^2$-norm of $\psi$. If $\Gamma \subset \Omega$ we use the notation $\lVert \psi \rVert_\Gamma = \lVert \chi_\Gamma \psi \rVert_\Omega$. Moreover, we denote by $B (\rho) \subset \mathbb{R}^d$ the open ball in $\mathbb{R}^d$ with radius $\rho>0$ and center zero, by $B(x,\rho) \subset \mathbb{R}^d$ the open ball in $\mathbb{R}^d$ with radius $\rho>0$ and center $x \in \mathbb{R}^d$.
\begin{definition}
 Let $G > 0$ and $\delta > 0$. We say that a sequence $z_j \in \mathbb{R}^d$, $j \in (G \ZZ)^d$, is \emph{$(G,\delta)$-equidistributed}, if
 \[
  \forall j \in (G \ZZ)^d \colon \quad  B(z_j , \delta) \subset \Lambda_G + j .
\]
Corresponding to a $(G,\delta)$-equidistributed sequence $z_j \in \mathbb{R}^d$, $j \in (G \ZZ)^d$, we define for $L > 0$ the sets
\[
\equi_{\delta} = \bigcup_{j \in (G \ZZ)^d } B(z_j , \delta) \subset \mathbb{R}^d \quad \text{and} \quad
\equi_{\delta , L} = \bigcup_{j \in (G \ZZ)^d } B(z_j , \delta) \cap \Lambda_L \subset \Lambda_L .
\]
see Fig.~\ref{fig:equidistributed} for an illustration.
Note that the sets $S_\delta$, $S_{\delta,L}$  depend on $G$ and the choice of the $(G,\delta)$-equidistributed sequence.
\end{definition}
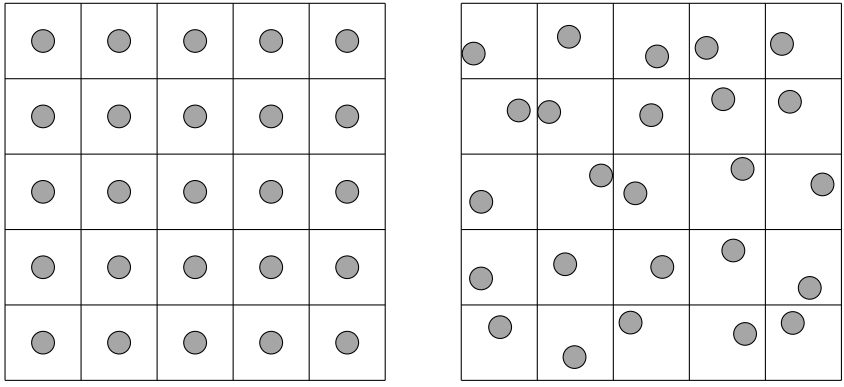
\begin{figure}[ht]\centering
\begin{tikzpicture}
\pgfmathsetseed{{\number\pdfrandomseed}}
\foreach \x in {0.5,1.5,...,4.5}{
  \foreach \y in {0.5,1.5,...,4.5}{
    \filldraw[fill=gray!70] (\x+rand*0.35,\y+rand*0.35) circle (0.15cm);
  }
}
\foreach \y in {0,1,2,3,4,5}{
\draw (\y,0) --(\y,5);
\draw (0,\y) --(5,\y);
}

\begin{scope}[xshift=-6cm]
\foreach \x in {0.5,1.5,...,4.5}{
  \foreach \y in {0.5,1.5,...,4.5}{
    \filldraw[fill=gray!70] (\x,\y) circle (1.5mm);
  }
}
\foreach \y in {0,1,2,3,4,5}{
  \draw (\y,0) --(\y,5);
  \draw (0,\y) --(5,\y);
}
\end{scope}
\end{tikzpicture}
\caption{\label{fig:equidistributed}%
Illustration of $S_{\delta,5}\subset  \Lambda_5 \subset \mathbb{R}^2$ for periodically (left)
and non-periodically (right) arranged $\delta$-equidistributed sequences.}
\end{figure}
\begin{theorem}[\cite{NakicTTV-15,NakicTTV-16-arxiv}] \label{thm:NTTV}
There is a constant $N = N(d)$, such that for all $G > 0$, all $\delta \in (0,G/2)$, all $(G,\delta)$-equidistributed sequences,
all measurable and bounded $V\colon {\mathbb{R}^d}\to \mathbb{R}$, all $L \in G\NN$, all $E_0 \geqslant 0$ and all $\phi \in \Ran (\chi_{(-\infty,E_0]}(h_{L}))$ we have
 \begin{equation*}
\lVert \phi \rVert_{\equi_{\delta , L}}^2
\geqslant C_{\sfUC}^{G} \lVert \phi \rVert_{\Lambda_L}^2 ,
\end{equation*}
where
\begin{equation*}
C_{\sfUC}^{G} = C_{\sfUC}^{G} (d, \delta ,  b  , \lVert V \rVert_\infty )
:=  \left(\frac{\delta}{G} \right)^{N \bigl(1 + G^{4/3} \lVert V \rVert_\infty^{2/3} + G \sqrt{ E_0} \bigr)} .
\end{equation*}
\end{theorem}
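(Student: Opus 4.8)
The plan is to deduce this estimate for the finite-dimensional spectral subspace $\Ran(\chi_{(-\infty,E_0]}(h_L))$ from the pointwise quantitative unique continuation principle of Section~\ref{ss:qUC}, via the classical ghost-dimension (lifting) device: one turns $\phi$, a linear combination of eigenfunctions with different eigenvalues, into a genuine solution $g$ of a \emph{single} second-order elliptic equation in one extra variable, and then applies the unique continuation bound cube by cube and sums. Since $h_L$ has compact resolvent, $\phi=\sum_k\alpha_k\phi_k$ is a finite sum over $L^2(\Lambda_L)$-eigenfunctions $\phi_k$ with $-\lVert V\rVert_\infty\le\mu_k\le E_0$; normalize $\lVert\phi\rVert_{\Lambda_L}=1$. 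On $\Lambda_L\times\RR\subset\RR^{d+1}$ set
\[
 g(x,t)=\sum_k\alpha_k\,\phi_k(x)\,\cos\!\bigl(\sqrt{E_0-\mu_k}\;t\bigr).
\]
Using $(-\Delta_L+V_L)\phi_k=\mu_k\phi_k$ one checks $(\Delta_x+\partial_t^2)g=(V_L-E_0)g$ on $\Lambda_L\times\RR$, with the same (Dirichlet, Neumann, or periodic) boundary condition in $x$ as $h_L$; the coefficient $V_L-E_0$ is bounded by $\lVert V\rVert_\infty+E_0$, and $g(\cdot,0)=\phi$, $\partial_t g(\cdot,0)=0$. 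Orthonormality of the $\phi_k$ over $\Lambda_L$ gives $\lVert g(\cdot,t)\rVert_{\Lambda_L}^2=\sum_k\alpha_k^2\cos^2(\sqrt{E_0-\mu_k}\,t)\le 1$ for all $t$, and $\ge\tfrac12$ for $\lvert t\rvert\le a$ as soon as $a\asymp(1+E_0+\lVert V\rVert_\infty)^{-1/2}$, hence
\[
 \lVert g\rVert_{\Lambda_L\times(-a,a)}^2=\int_{-a}^{a}\lVert g(\cdot,t)\rVert_{\Lambda_L}^2\,dt\ \in\ [\,a,\,2a\,].
\]

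Next I would run the quantitative unique continuation bound one cube at a time. Fix a cube $\Lambda_G+j$, $j\in(G\ZZ)^d$, of the tiling $\Lambda_L=\bigcup_j(\Lambda_G+j)$ (here $L\in G\NN$ enters); for cubes touching $\partial\Lambda_L$, first extend $g$ across the boundary by odd reflection, even reflection, or periodic continuation according to the boundary condition, so that $g$ solves an elliptic equation of the same type on a full $\RR^{d+1}$-neighbourhood of $(z_j,0)$. Applying the principle of Section~\ref{ss:qUC} to $g$ at $(z_j,0)$ yields a loss-free (power-type) propagation-of-smallness inequality
\[
 \lVert g\rVert_{B((z_j,0),\,\sqrt d\,G+a)}\ \le\ \Bigl(\tfrac{G}{\delta}\Bigr)^{M}\lVert g\rVert_{B((z_j,0),\,\delta)},\qquad M\le N(d)\bigl(1+G^{4/3}\lVert V\rVert_\infty^{2/3}+G\sqrt{E_0}\bigr).
\]
The term $G^{4/3}\lVert V\rVert_\infty^{2/3}$ is the outcome of optimizing the large parameter in the Carleman estimate underlying Section~\ref{ss:qUC} with a weight calibrated to $\lVert V\rVert_\infty$ and the scale $G$ (not the plain Laplacian weight); the sharper term $G\sqrt{E_0}$ — instead of the crude $G^{4/3}E_0^{2/3}$ that treating $V_L-E_0$ as a generic potential would give — exploits that $g$ depends on the ghost variable only through $\cos(\sqrt{E_0-\mu_k}\,t)$ with $\sqrt{E_0-\mu_k}\le\sqrt{E_0+\lVert V\rVert_\infty}$, i.e.\ is ``band-limited'' of band $\lesssim\sqrt{E_0}$ in $t$. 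Since $\partial_t g(\cdot,0)=0$, a local Cauchy-data estimate for the same equation bounds $\lVert g\rVert_{B((z_j,0),\delta)}$ by $C\lVert\phi\rVert_{B(z_j,\delta)}$ (up to adjusting $M$ by $O(1)$; recall $B(z_j,\delta)\subset\Lambda_G+j$ because $\delta<G/2$). Using $(\Lambda_G+j)\times(-a,a)\subset B((z_j,0),\sqrt d\,G+a)$ we arrive at the per-cube estimate $\lVert g\rVert_{(\Lambda_G+j)\times(-a,a)}^2\le(G/\delta)^{2M}\lVert\phi\rVert_{B(z_j,\delta)}^2$.

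Summing over all $j$ with $\Lambda_G+j\subset\Lambda_L$ and using that the sets $(\Lambda_G+j)\times(-a,a)$ tile $\Lambda_L\times(-a,a)$,
\[
 a\ \le\ \lVert g\rVert_{\Lambda_L\times(-a,a)}^2=\sum_j\lVert g\rVert_{(\Lambda_G+j)\times(-a,a)}^2\ \le\ \Bigl(\tfrac{G}{\delta}\Bigr)^{2M}\sum_j\lVert\phi\rVert_{B(z_j,\delta)}^2=\Bigl(\tfrac{G}{\delta}\Bigr)^{2M}\lVert\phi\rVert_{S_{\delta,L}}^2,
\]
whence $\lVert\phi\rVert_{S_{\delta,L}}^2\ge a\,(\delta/G)^{2M}$; absorbing $a^{-1}\le C(1+E_0+\lVert V\rVert_\infty)^{1/2}$ and the factor $2$ into a fresh dimensional constant $N'$ gives $\lVert\phi\rVert_{S_{\delta,L}}^2\ge(\delta/G)^{N'(1+G^{4/3}\lVert V\rVert_\infty^{2/3}+G\sqrt{E_0})}\lVert\phi\rVert_{\Lambda_L}^2$, which is the assertion.

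The crux — and the whole point of the theorem — is that the constant is independent of $L$ and of the number $(L/G)^d$ of cubes. This survives precisely because the cube-wise step delivers a genuinely multiplicative bound with \emph{no loss of exponent}: a naive iteration of three-ball inequalities would carry an interpolation exponent $1/\theta>1$, which on summation over $(L/G)^d$ cubes produces a factor $(L/G)^{-d(1/\theta-1)}\to0$ and wrecks scale-freeness. So the main obstacle is establishing the loss-free quantitative unique continuation principle of Section~\ref{ss:qUC} with exactly the exponent $1+G^{4/3}\lVert V\rVert_\infty^{2/3}+G\sqrt{E_0}$: this needs a Carleman estimate whose weight is adapted to $\lVert V\rVert_\infty$ and to the scale $G$, and, for the $\sqrt{E_0}$ contribution, the band-limitation argument in the ghost variable. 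Handling the boundary cubes compatibly with the boundary condition is an additional bookkeeping step — harmless here since $V_L-E_0$ is merely bounded, but delicate once the second-order coefficients vary, which is where the slowly-varying-coefficient hypothesis of the paper's main theorems originates.
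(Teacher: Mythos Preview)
Note first that Theorem~\ref{thm:NTTV} is not proved in this paper; it is quoted from \cite{NakicTTV-15,NakicTTV} as a benchmark for the Schr\"odinger case. The paper's own results for spectral projectors (Theorems~\ref{cor:sampling}, \ref{cor:L:per}, \ref{cor:L:Dir}) cover only short windows $[E-\gamma,E+\gamma]$, not $(-\infty,E_0]$, and it is precisely the ghost-dimension device you describe that is used in the cited references to bridge this gap. So your overall strategy is the right one.

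That said, your execution has a genuine gap at the per-cube step. You assert that Section~\ref{ss:qUC} yields, for every cube $j$, a loss-free inequality $\lVert g\rVert_{B((z_j,0),\sqrt d\,G+a)}\le(G/\delta)^M\lVert g\rVert_{B((z_j,0),\delta)}$ with $M$ independent of $j$. But Theorem~\ref{thm:qUC} does not give this: it requires an a-priori bound $\lVert\psi\rVert_\Omega^2/\lVert\psi\rVert_\Theta^2\le\beta$, and by Lemma~\ref{lemma:constants} the constant $C_\qUC$ carries $\ln\beta$ in its exponent. For an arbitrary cube this ratio is not uniformly bounded --- $g$ may have almost no mass on $(\Lambda_G+j)\times(-a,a)$ while the enveloping ball $\Omega_j$ picks up mass from neighbouring cubes --- so $M$ is not uniform in $j$ and your summation collapses. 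Both the present paper (see the proof of Theorem~\ref{thm:sampling}) and the cited references handle this with the \emph{dominating-sites} argument you omit: one first isolates those lattice cubes for which the ratio is at most a fixed dimensional constant $2T^d$, shows these carry at least half the total mass, and applies Theorem~\ref{thm:qUC} only there with $\beta=2T^d$. Without this step the scale-freeness you rightly emphasise in your last paragraph is not secured; in effect you have identified the obstacle but not the mechanism that overcomes it.

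A second, smaller issue concerns the exponent. With your lift $s_k(t)=\cos(\sqrt{E_0-\mu_k}\,t)$ the equation is $-\Delta_{d+1}g+(V_L-E_0)g=0$, so the effective potential has sup-norm $\lVert V\rVert_\infty+E_0$, and the machinery delivers $G^{4/3}E_0^{2/3}$ in the exponent, not the sharper $G\sqrt{E_0}$ stated in Theorem~\ref{thm:NTTV}. Your ``band-limitation'' heuristic does not interface with the Carleman estimate. To obtain $G\sqrt{E_0}$ one instead chooses $s_k$ with $s_k''=\mu_k s_k$ (hence $\cosh(\sqrt{\mu_k}\,t)$ for $\mu_k>0$ and $\cos(\sqrt{-\mu_k}\,t)$ for $\mu_k\le0$), so that $g$ solves $-\Delta_{d+1}g+V_L g=0$ with potential $V_L$ alone; the $E_0$ then enters only through the growth $\lesssim e^{\sqrt{E_0}\lvert t\rvert}$ of the $s_k$, which feeds into the dominating-sites ratio over $t$-slabs of width $\sim G$ and produces the term $G\sqrt{E_0}$.
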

This extends previous results of \cite{CombesHK-07}, \cite{Rojas-MolinaV-13}, and \cite{Klein-13}. We denote by $\Delta : W^{2,2} (\mathbb{R}^d) \allowbreak \to L^2 (\mathbb{R}^d)$ the Laplace operator on $\mathbb{R}^d$.
\begin{theorem}[\cite{TautenhahnV-15}] \label{thm:TV}
There is a constant $N = N (d)$,
such that for all $E_0,G > 0$, all $\delta \in (0,G/2)$, all $(G,\delta)$-equidistributed sequences,
all measurable and bounded $V:\mathbb{R}^d \to \mathbb{R}$, and all intervals $I \subset (-\infty , E_0]$ with
\[
 \vert I \rvert \leqslant 2 \gamma \quad \text{where} \quad
 \gamma^2 = \frac{1}{2G^4} \left(\frac{\delta}{G}\right)^{N \bigl(1+ G^{4/3}(2\lVert V\rVert_\infty + E_0)^{2/3} \bigr)} ,
\]
and all $\psi \in \Ran (\chi_{I}(-\Delta + V))$ we have
\[
  \lVert \phi \rVert_{\equi_{\delta}}^2
\geqslant G^4 \gamma^2 \lVert \phi \rVert_{\mathbb{R}^d}^2 .
 \]
\end{theorem}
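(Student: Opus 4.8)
\emph{Proof plan.} The plan is to deduce this whole-space estimate from its finite-volume counterpart, Theorem~\ref{thm:NTTV}, by exhausting $\RR^d$ with cubes $\Lambda_L$, $L\in G\NN$, truncating $\psi$ to $\Lambda_L$, and using a spectral cut-off to turn the truncated function into a genuine low-energy element of $\Ran(\chi_{(-\infty,E_1]}(h_L))$ for a suitable $E_1$ comparable to $E_0$. We may assume $\lVert\psi\rVert_{\RR^d}=1$ and write $H=-\Delta+V$; if $I\cap\sigma(H)=\emptyset$ then $\psi=0$ and there is nothing to prove, so let $\mu$ be the midpoint of $I$. Since $\sigma(H)\subset[-\lVert V\rVert_\infty,\infty)$ and $I\subset(-\infty,E_0]$ we get $-\lVert V\rVert_\infty-\gamma\leqslant\mu\leqslant E_0$, while $I\subset[\mu-\gamma,\mu+\gamma]$ gives $\lVert(H-\mu)\psi\rVert_{\RR^d}\leqslant\gamma$. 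Finally $\psi\in\Dom(H)=W^{2,2}(\RR^d)$ with $\lVert\nabla\psi\rVert_{\RR^d}^2=\langle\psi,(H-V)\psi\rangle\leqslant E_0+\lVert V\rVert_\infty$, so $\psi,\nabla\psi\in L^2(\RR^d)$.

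First I would fix $\epsilon>0$ and choose $L\in G\NN$ so large that $\lVert\psi\rVert_{\RR^d\setminus\Lambda_{L-1}}^2+\lVert\nabla\psi\rVert_{\RR^d\setminus\Lambda_{L-1}}^2<\epsilon$, and pick $\theta\in C_c^\infty(\Lambda_L)$ with $0\leqslant\theta\leqslant1$, $\theta\equiv1$ on $\Lambda_{L-1}$ and $\lVert\nabla\theta\rVert_\infty+\lVert\Delta\theta\rVert_\infty\leqslant c_d$, a dimensional constant (possible since the annulus $\Lambda_L\setminus\Lambda_{L-1}$ has width bounded below uniformly in $L$). Then $u:=\theta\psi$ is supported away from $\partial\Lambda_L$, hence $u\in\Dom(h_L)$ for any of the three boundary conditions, and
\[
(h_L-\mu)u=\theta(H-\mu)\psi-2\nabla\theta\cdot\nabla\psi-(\Delta\theta)\psi=:r,
\]
where the last two terms are supported in $\Lambda_L\setminus\Lambda_{L-1}$; hence $\lVert r\rVert_{\Lambda_L}\leqslant\gamma+2c_d\sqrt\epsilon$. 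Moreover $\lVert u\rVert_{\Lambda_L}^2\geqslant\lVert\psi\rVert_{\Lambda_{L-1}}^2\geqslant1-\epsilon$, and $0\leqslant\theta\leqslant1$ gives $\lVert u\rVert_{\equi_{\delta,L}}\leqslant\lVert\psi\rVert_{\equi_\delta}$.

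Next I would set $P=\chi_{[\mu-\beta,\mu+\beta]}(h_L)$ for a window half-width $\beta>0$ to be chosen (we shall see $\beta\leqslant\sqrt2\,G^{-2}$, so $\mu+\beta\leqslant E_1:=E_0+\sqrt2\,G^{-2}$ and $Pu\in\Ran(\chi_{(-\infty,E_1]}(h_L))$). Theorem~\ref{thm:NTTV} applied to $Pu$ with comparison energy $E_1$ then yields $\lVert Pu\rVert_{\equi_{\delta,L}}^2\geqslant C\lVert Pu\rVert_{\Lambda_L}^2$ with $C:=C_{\sfUC}^G$ the corresponding constant, while the spectral theorem gives $\lVert(1-P)u\rVert_{\Lambda_L}^2\leqslant\beta^{-2}\lVert r\rVert_{\Lambda_L}^2$ and hence $\lVert Pu\rVert_{\Lambda_L}^2\geqslant1-\epsilon-\beta^{-2}\lVert r\rVert^2$. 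Combining these with $\lVert u\rVert_{\equi_{\delta,L}}\geqslant\lVert Pu\rVert_{\equi_{\delta,L}}-\lVert(1-P)u\rVert_{\equi_{\delta,L}}$ and the elementary inequality $(a-b)^2\geqslant\tfrac12a^2-b^2$ gives
\[
\lVert\psi\rVert_{\equi_\delta}^2\geqslant\lVert u\rVert_{\equi_{\delta,L}}^2\geqslant\tfrac12 C\,\lVert Pu\rVert_{\Lambda_L}^2-\lVert(1-P)u\rVert_{\Lambda_L}^2\geqslant\tfrac12 C\bigl(1-\epsilon-\beta^{-2}\lVert r\rVert^2\bigr)-\beta^{-2}\lVert r\rVert^2,
\]
and, since $\psi$ and $\equi_\delta$ are fixed, letting $L\to\infty$ (so $\epsilon\to0$, $\lVert r\rVert\to\gamma$) leaves $\lVert\psi\rVert_{\equi_\delta}^2\geqslant\tfrac12 C(1-\gamma^2/\beta^2)-\gamma^2/\beta^2$.

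The hard part is the final choice of $\beta$ together with the verification that this bound is at least $G^4\gamma^2$. Taking $\beta^2=4\gamma^2/C$ forces $\gamma^2/\beta^2=C/4$, hence $\lVert\psi\rVert_{\equi_\delta}^2\geqslant C(\tfrac14-\tfrac18 C)\geqslant\tfrac18 C$ (using $C\leqslant1$); and with $C_{\mathrm{TV}}:=(\delta/G)^{N(1+G^{4/3}(2\lVert V\rVert_\infty+E_0)^{2/3})}=2G^4\gamma^2$ one has $\beta^2=2G^{-4}C_{\mathrm{TV}}/C$, so the announced bound $\beta\leqslant\sqrt2\,G^{-2}$ holds precisely when $C_{\mathrm{TV}}\leqslant C$, which is the inequality that remains to be established. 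Both constants are powers of $\delta/G\in(0,\tfrac12)$, so after comparing exponents and absorbing the numerical prefactors into the factor $(G/\delta)^{\geqslant1}$ the matter reduces to $G^{4/3}\lVert V\rVert_\infty^{2/3}+G\sqrt{E_1}\lesssim G^{4/3}(2\lVert V\rVert_\infty+E_0)^{2/3}$ with a dimensional implied constant; this is checked by splitting into the regimes $2\lVert V\rVert_\infty+E_0\geqslant G^{-2}$ (where $G\sqrt{E_1}\lesssim G^{4/3}E_0^{2/3}$) and $2\lVert V\rVert_\infty+E_0\leqslant G^{-2}$ (where $G\sqrt{E_1}\lesssim1$ is absorbed into the constant term), at the cost of taking $N$ in Theorem~\ref{thm:TV} to be a sufficiently large multiple of the constant in Theorem~\ref{thm:NTTV}. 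So the genuine obstacle is this balancing act: the spectral cut-off costs a factor $\sim\gamma^2/\beta^2$, one needs it to be only a fixed fraction of the gain $C_{\sfUC}^G$ from Theorem~\ref{thm:NTTV}, and simultaneously $\beta$ must stay small enough that the comparison energy $E_1$ remains of order $E_0$; it is exactly this that dictates the prefactor $1/(2G^4)$ in $\gamma$ and the appearance of $(2\lVert V\rVert_\infty+E_0)$ --- essentially the sup-norm of the effective potential $V-\mu$ --- in its exponent. Alternatively one can run the same truncation directly against the quantitative unique continuation estimate of \S\ref{ss:qUC} applied to $-\Delta+(V-\mu)$, using that $u=\theta\psi$ satisfies $\lvert\Delta u\rvert\leqslant\lVert V-\mu\rVert_\infty\lvert u\rvert+\lvert r\rvert$ with $\lVert r\rVert$ small, which avoids the detour through the half-line spectral projection of $h_L$.
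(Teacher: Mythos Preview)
The paper does not give its own proof of Theorem~\ref{thm:TV}; the result is quoted from \cite{TautenhahnV-15} as a benchmark for the Schr\"odinger case. What the paper \emph{does} prove is the variable-coefficient analogue, Theorem~\ref{cor:sampling}, and that argument is structurally quite different from yours and much shorter: one applies the whole-space sampling theorem (Theorem~\ref{thm:samplingG}) directly to $\psi\in W^{2,2}(\RR^d)$ with the choices $V\equiv E$ and $\zeta=(H-E)\psi$, so that $\lvert\Op\psi\rvert\leqslant\lvert E\psi\rvert+\lvert\zeta\rvert$ holds by the triangle inequality and $\lVert\zeta\rVert_{\RR^d}\leqslant\gamma\lVert\psi\rVert_{\RR^d}$ by the spectral theorem; the extra term $\delta^2G^2\lVert\zeta\rVert^2$ is then absorbed in one line because $\delta^2G^2\gamma^2\leqslant C_{\sfUC}/4$ by the very definition of $\gamma$. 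No truncation to $\Lambda_L$, no finite-volume operator $h_L$, and no auxiliary spectral projection $P$ are needed.

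Your exhaustion-by-cubes route is essentially correct: the Leibniz computation for $(h_L-\mu)u$, the spectral bound $\lVert(1-P)u\rVert\leqslant\beta^{-1}\lVert r\rVert$, and the exponent comparison you outline all go through once $N$ is taken as a sufficiently large dimensional multiple of the constant in Theorem~\ref{thm:NTTV}. (One small point: fix $E_1:=E_0+\sqrt2\,G^{-2}$ and $C:=C_{\sfUC}^G(E_1)$ \emph{before} defining $\beta^2=4\gamma^2/C$, so that the argument is not circular.) But it is considerably longer, and the passage through $h_L$ forces you to deal with boundary conditions and cutoff errors that the paper's method sidesteps entirely. In fact the ``alternative'' you sketch in your last sentence --- viewing $\psi$ as satisfying $\lvert\Delta\psi\rvert\leqslant\lVert V-\mu\rVert_\infty\lvert\psi\rvert+\lvert\zeta\rvert$ on $\RR^d$ and feeding this into a sampling theorem with inhomogeneity --- is precisely the paper's method (specialised to $A=\mathrm{Id}$), and once you adopt it the truncation $\theta$ becomes unnecessary since $\psi$ already lies in $W^{2,2}(\RR^d)$.
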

This is an adaptation of the main theorem of \cite{Klein-13} to the space $\mathbb{R}^d$.
Theorem~\ref{thm:TV} covers only short energy intervals $I$. An extension of this result to the case of arbitrary
compact intervals $I$ is not immediate by looking at the proof of \cite{NakicTTV-15,NakicTTV-16-arxiv}, but using a generalized
eigenfunction expansion it is likely that such an extension can be proven, which leads to the following
\begin{conjecture}
There is a constant $N = N (d)$,
such that for all $E_0,G > 0$, all $\delta \in (0,G/2)$, all $(G,\delta)$-equidistributed sequences,
all measurable and bounded $V:\mathbb{R}^d \to \mathbb{R}$, all $E_0>0$,
and all $\phi \in \Ran (\chi_{(-\infty,E_0]}(-\Delta + V))$ we have
 \begin{equation*}
\lVert \phi \rVert_{\equi_{\delta}}^2
\geqslant C_{\sfUC}^{G} \lVert \phi \rVert_{\mathbb{R}^d}^2
\end{equation*}
with $C_{\sfUC}^{G} $ as above.
\end{conjecture}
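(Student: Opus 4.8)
The plan is to derive the statement on $\RR^d$ from the finite-volume estimate of Theorem~\ref{thm:NTTV} by an exhaustion argument in which the side length $L \in G\NN$ tends to infinity. Fix all the data as in the statement, a $(G,\delta)$-equidistributed sequence and the associated sets $\equi_\delta$, $\equi_{\delta,L}$, write $H = -\Delta + V$ on $L^2(\RR^d)$, and let $\phi \in \Ran(\chi_{(-\infty,E_0]}(H))$ with $\phi \neq 0$ (the case $\phi = 0$ being trivial). For $L \in G\NN$ define
\[
  \phi_L := \chi_{(-\infty,E_0]}(h_L)\bigl(\chi_{\Lambda_L}\phi\bigr) \in \Ran\bigl(\chi_{(-\infty,E_0]}(h_L)\bigr),
\]
extended by zero to $\RR^d$. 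Since $h_L$ has compact resolvent, $\phi_L$ is a finite linear combination of eigenfunctions of $h_L$, so Theorem~\ref{thm:NTTV} applies --- with the same $\delta$, $G$, $E_0$ and with $\lVert V_L\rVert_\infty \leqslant \lVert V\rVert_\infty$, noting that $C_{\sfUC}^{G}$ is nonincreasing in $\lVert V\rVert_\infty$ --- and yields
\[
  \lVert \phi_L\rVert_{\equi_{\delta,L}}^2 \;\geqslant\; C_{\sfUC}^{G}\,\lVert \phi_L\rVert_{\Lambda_L}^2 \;=\; C_{\sfUC}^{G}\,\lVert \phi_L\rVert_{\RR^d}^2 .
\]

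The remaining task is to pass to the limit $L\to\infty$. The key analytic input is that $h_L \to H$ in the strong resolvent sense as $L\to\infty$, under the zero-extension embedding $L^2(\Lambda_L)\hookrightarrow L^2(\RR^d)$; this is classical for Dirichlet and Neumann boundary conditions and also holds, with a little extra care since the embedding is not form-compatible, for periodic boundary conditions. If in addition $E_0 \notin \sigma_{\mathrm p}(H)$, then $\chi_{(-\infty,E_0]}(h_L)\to\chi_{(-\infty,E_0]}(H)$ strongly; since moreover $\chi_{\Lambda_L}\phi\to\phi$ in $L^2(\RR^d)$ and $\lVert\chi_{(-\infty,E_0]}(h_L)\rVert\leqslant 1$ uniformly, it follows that $\phi_L\to\chi_{(-\infty,E_0]}(H)\phi=\phi$ in $L^2(\RR^d)$, hence $\lVert\phi_L\rVert_{\RR^d}^2\to\lVert\phi\rVert_{\RR^d}^2$. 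On the left-hand side, $\equi_{\delta,L}\uparrow\equi_\delta$ as $L\uparrow\infty$ through $G\NN$, so $\lVert\chi_{\equi_{\delta,L}}\phi_L-\chi_{\equi_\delta}\phi\rVert \leqslant \lVert\phi_L-\phi\rVert+\lVert\chi_{\equi_\delta\setminus\equi_{\delta,L}}\phi\rVert\to 0$ by dominated convergence, and thus $\lVert\phi_L\rVert_{\equi_{\delta,L}}^2\to\lVert\phi\rVert_{\equi_\delta}^2$. Passing to the limit in the displayed inequality proves the claim whenever $E_0\notin\sigma_{\mathrm p}(H)$.

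The hypothesis $E_0\notin\sigma_{\mathrm p}(H)$ is removed at the end: because $L^2(\RR^d)$ is separable, $\sigma_{\mathrm p}(H)$ is at most countable, so there is a sequence $E_0^{(n)}\downarrow E_0$ with $E_0^{(n)}\notin\sigma_{\mathrm p}(H)$. Every $\phi\in\Ran(\chi_{(-\infty,E_0]}(H))$ also lies in $\Ran(\chi_{(-\infty,E_0^{(n)}]}(H))$, so the case already treated gives $\lVert\phi\rVert_{\equi_\delta}^2 \geqslant C_{\sfUC}^{G}(d,\delta,\lVert V\rVert_\infty,E_0^{(n)})\,\lVert\phi\rVert_{\RR^d}^2$ for all $n$; letting $n\to\infty$ and using that $C_{\sfUC}^{G}$ is continuous (and monotone) in its last argument yields the asserted bound with $C_{\sfUC}^{G}$ as in Theorem~\ref{thm:NTTV}.

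I expect the only real obstacle to be the strong resolvent convergence $h_L\to H$ together with the resulting convergence of the spectral projectors --- in particular the periodic case, where the natural embedding does not respect the form domains, and the care needed at the endpoint $E_0$. Everything else is routine functional-analytic bookkeeping; notably, no Carleman-type estimate beyond the one already underlying Theorem~\ref{thm:NTTV} is needed. An alternative, matching the remark that precedes the conjecture, would be to expand $\phi$ in generalized eigenfunctions of $H$ and run the Carleman/lifting argument of Theorem~\ref{thm:TV} over the whole energy window $(-\infty,E_0]$ at once; this bypasses the finite-volume approximation at the cost of having to control the generalized eigenfunction transform and the associated spectral measure.
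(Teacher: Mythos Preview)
This statement is labeled a \emph{conjecture} in the paper and is not proved there; the surrounding text only remarks that ``using a generalized eigenfunction expansion it is likely that such an extension can be proven.'' So there is no proof in the paper to compare against --- your argument has to stand on its own.

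As far as I can see, your exhaustion argument is correct and actually constitutes a proof. Two remarks tighten it. First, since Theorem~\ref{thm:NTTV} is stated for any of Dirichlet, Neumann, or periodic boundary conditions, you are free to fix the Dirichlet realization $h_L^{\mathrm{Dir}}$ throughout; then strong resolvent convergence $h_L^{\mathrm{Dir}}\to H$ is genuinely classical (monotone convergence of the closed forms with form domains $H_0^1(\Lambda_L)\hookrightarrow H^1(\RR^d)$), and the caveat you raise about the periodic case becomes irrelevant. Second, to pass from strong resolvent convergence to strong convergence of $\chi_{(-\infty,E_0]}$, note that $H$ and all $h_L$ are uniformly bounded below by $-\lVert V\rVert_\infty$, so $(-\infty,E_0]$ may be replaced by a bounded interval and one invokes the standard fact that spectral projections onto intervals whose endpoints avoid $\sigma_{\mathrm p}(H)$ converge strongly under SRS; the varying-space issue is cleanly handled by embedding $h_L$ in $L^2(\RR^d)$ as $\hat h_L:=h_L\oplus c\,I$ on $L^2(\Lambda_L)\oplus L^2(\Lambda_L^c)$ for any fixed $c>E_0$, so that $\hat h_L\to H$ in SRS on a fixed Hilbert space and $\chi_{(-\infty,E_0]}(\hat h_L)$ equals your $P_L$. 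Your endpoint treatment via $E_0^{(n)}\downarrow E_0$ with $E_0^{(n)}\notin\sigma_{\mathrm p}(H)$ is then fine.

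Your route is more elementary than the one the authors hint at: you use Theorem~\ref{thm:NTTV} as a black box together with soft functional analysis, whereas a generalized-eigenfunction argument would re-run the Carleman/lifting machinery of \cite{NakicTTV-15,NakicTTV} directly on $\RR^d$. The latter would presumably give a self-contained proof independent of the finite-volume result; yours gets there faster but inherits Theorem~\ref{thm:NTTV} as an input.
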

\begin{remark}
	Meanwhile, during the refereeing process, in a joint project with Ivica Naki\'c and Matthias T\"aufer, we have developed a proof of the above conjecture which will be published in \cite{NakicTTV-17-prep}.
\end{remark}
\section{Main results}\label{s:results}
Now we turn to the class of models which is treated in the theorems and proofs of the present paper.
Let $d \in \NN$ and $\Op$ be the second order partial differential 
expression
\begin{equation*} 
\Op u := - \diver (A \nabla u) + b^\T \nabla u + c u = -\sum_{i,j=1}^d \partial_i \left( a^{ij} \partial_j u \right) + \sum_{i=1}^d b_i \partial_i u + c u ,
\end{equation*}
where  $A : \mathbb{R}^d \to \mathbb{R}^{d \times d}$ with $A = (a^{ij})_{i,j=1}^d$, $b : \mathbb{R}^d \to \mathbb{C}^d$, $c : \mathbb{R}^d \to \mathbb{C}$, and $\partial_i$ denotes the $i$-th weak derivative.
We assume that $a^{ij} \equiv a^{ji}$ for all $i,j \in \{1,\ldots , d\}$, and that there are constants $\vartheta_1 \geqslant 1$ and $\vartheta_2 \geqslant 0$ such that for all $x,y \in \mathbb{R}^d$ and all $\xi \in \mathbb{R}^d$ we have
\begin{equation} \label{eq:elliptic}
\vartheta_1^{-1} \lvert \xi \rvert^2 \leqslant \xi^\T A (x) \xi \leqslant \vartheta_1 \lvert \xi \rvert^2
\quad\text{and}\quad
\lVert A(x) - A(y) \rVert_\infty \leqslant \vartheta_2 \lvert x-y \rvert .
\end{equation}
Moreover, we assume that $b,c \in L^\infty (\mathbb{R}^d)$.
Here we denote by $\lvert z \rvert$ the Euclidean norm of $z \in \mathbb{C}^d$, and by $\lVert M \rVert_\infty$ the row sum norm of a matrix $M \in \mathbb{C}^{d\times d}$.
%
%
%
\subsection{Sampling theorems on $\mathbb{R}^d$}
\label{sec:samplingR}
\begin{theorem}[Sampling Theorem]\label{thm:sampling}
Assume
\begin{equation} \label{ass:samplingG=1}
 \epsilon_1 := 1 - 33 \mathrm{e} d (\sqrt{d} + 2) \vartheta_1^{6} \vartheta_2   > 0 .
\end{equation}
Then for all measurable and bounded $V : \mathbb{R}^d \to \mathbb{R}$, all $\psi \in W^{2,2} (\mathbb{R}^d)$ and $\zeta \in L^2 (\mathbb{R}^d)$ satisfying $\lvert \Op \psi \rvert \leqslant \lvert V\psi \rvert + \lvert \zeta \rvert$ almost everywhere on $\mathbb{R}^d$, all $\delta \in (0,1/2)$ and all $(1,\delta)$-equi\-distri\-buted sequences we have
\[
 \lVert \psi \rVert_{\equi_\delta}^2 + \delta^2 \lVert \zeta \rVert_{\mathbb{R}^d}^2 \geqslant c_{\sfUC} \lVert \psi \rVert_{\mathbb{R}^d}^2 ,
\]
where
\begin{equation*} 
 c_{\sfUC} = d_1 \left( \frac{\delta}{d_2} \right)^{\frac{d_3}{\epsilon_1} \bigl( 1+  \lVert V \rVert_\infty^{2/3} + \lVert b \rVert_\infty^{2} + \lVert c \rVert_\infty^{2/3} \bigr) - \ln \epsilon_1}
\end{equation*}
with
\[
 d_1 = \frac{K_2 \vartheta_1^{-31/2-d} \mathrm{e}^{-10\vartheta_1}}{(1+\vartheta_2)^3},
 \quad
 d_2 = K_2 \vartheta_1^{2},
 \quad \text{and} \quad
 d_3 = K_2 \vartheta_1^{25} \mathrm{e}^{15 \vartheta_1} (1+\vartheta_2)^2 ,
\]
where $K_2$ is a positive constant depending only on the dimension.
\end{theorem}
By scaling, see Appendix~\ref{app:scaling}, we obtain the following variant of Theorem~\ref{thm:sampling} for $(G,\delta)$-equi\-distributed sequences.
\begin{theorem}\label{thm:samplingG}
Let $G > 0$ and assume
\begin{equation} \label{ass:sampling}
 \epsilon_2 := 1 - 33 \mathrm{e} d (\sqrt{d} + 2) \vartheta_1^{6} G \vartheta_2   > 0.
\end{equation}
Then for all measurable and bounded $V : \mathbb{R}^d \to \mathbb{R}$, all $\psi \in W^{2,2} (\mathbb{R}^d)$ and $\zeta \in L^2 (\mathbb{R}^d)$ satisfying $\lvert \Op \psi \rvert \leqslant \lvert V\psi \rvert + \lvert \zeta \rvert$ almost everywhere on $\mathbb{R}^d$, all $\delta \in (0,G/2)$ and all $(G,\delta)$-equi\-distri\-buted sequences we have
\[
 \lVert \psi \rVert_{\equi_\delta}^2 + \delta^2 G^2 \lVert \zeta \rVert_{\mathbb{R}^d}^2 \geqslant C_{\sfUC} \lVert \psi \rVert_{\mathbb{R}^d}^2 ,
\]
where
\begin{equation} \label{eq:definition-csfuc}
 C_{\sfUC} = D_1 \left( \frac{\delta}{G D_2} \right)^{\frac{D_3}{\epsilon_2} \bigl( 1 +  G^{4/3}\lVert V \rVert_\infty^{2/3} + G^2 \lVert b \rVert_\infty^{2} + G^{4/3}\lVert c \rVert_\infty^{2/3} \bigr) -\ln \epsilon_2}
\end{equation}
with
\[
 D_1 = \frac{K_2 \vartheta_1^{-31/2-d} \mathrm{e}^{-10\vartheta_1}}{(1+G\vartheta_2)^3},
 \quad
 D_2 = K_2 \vartheta_1^{2},
 \quad \text{and} \quad
 D_3 = K_2 \vartheta_1^{25} \mathrm{e}^{15 \vartheta_1} (1+G\vartheta_2)^2 ,
\]
where $K_2$ is a positive constant depending only on the dimension.
\end{theorem}
Obviously, $\epsilon_1$ in \eqref{ass:samplingG=1} equals $\epsilon_2$ with $G = 1$.
In the same way, $c_{\sfUC}$, $d_1$, $d_2$, and $d_3$ coincide with $C_{\sfUC}$, $D_1$, $D_2$, and $D_3$ in the case
$G = 1$.
\begin{remark}
Let us discuss condition \eqref{ass:samplingG=1} and \eqref{ass:sampling}, respectively.
Since the ellipticity constant $\vartheta_1$ is at least one,
it is required that the product $G \cdot \vartheta_2$ of the length scale and Lipschitz constant be small.
(If the scale is set to $G=1$ it means that $\vartheta_2$ should be small.
Compare also condition \eqref{ass:qUC} in Theorem~\ref{thm:qUC}.)
This means that the variation of the coefficients of the second order term should be sufficiently small
\emph{on the relevant scale} $G$.
Coinciding with physical intuition, it is possible to satisfy assumption \eqref{ass:samplingG=1}
by increasing the sampling rate appropriately, i.e.~by choosing the cubes $\Lambda_G +j$, which define the
$(G,\delta)$-equidistribution property, sufficiently small.

A condition like \eqref{ass:samplingG=1} or \eqref{ass:qUC}
naturally appears in the literature on unique continuation.
To satisfy it, authors usually assume that the radius/scale $R$, respectively $G$, is sufficiently small.
To prove the unique continuation property this is no restriction, since then only small balls are of interest.
\end{remark}

\begin{remark}[Scale free unique continuation constant $C_{\sfUC}$] \label{rem:constant}
To appreciate the theorem properly one wants to understand how
the constant $C_{\sfUC}$ depends on the model parameters.
First of all, we see that it is polynomial in the small radius $\delta$,
and that the exponent exhibited in \eqref{eq:definition-csfuc} is an estimate on the vanishing order,
e.g.\ as studied in \cite{DonnellyF-88,Kukavica-98}.
Second, one sees that the estimate is uniform as the potential $V$ or the coefficients $c$ and $b$ vary over ensembles
with uniformly bounded $L^\infty$-norm. No regularity properties, as encoded in Sobolev or total variation norms,
play a role here.
This is of importance, e.g., if one wants to derive results for random operators, where one is dealing not with one operator,
but a whole family of them, and aims at uniform bounds. See \cite{Rojas-MolinaV-13,Klein-13,BourgainK-13,NakicTTV-15,NakicTTV-16-arxiv}
for recent applications of this type.
Note that the dependence of the exponent in
$C_{\sfUC}$ is at worst quadratic with respect to $\|b\|_\infty$, $\|c\|_\infty$, $\|V\|_\infty$.
Thirdly, we see that Theorems~\ref{thm:L:per} to \ref{cor:L:Dir} hold for all odd $L\in \NN$ with a uniform constant $C_{\sfUC}$
independent of $L$. This is actually the reason why we call it \emph{scale-free unique continuation constant}.
Fourthly, note that the bound is stable under small shifts of the $\delta$-balls inside the periodicity cells.
All that needs to be satisfied is the geometric equidistribution property for the $\delta$-balls. This is crucial,
if one is considering a model which is ideally periodic, but one wants to make sure that results do not break
down if, more realistically,  small deviations from the ideal lattice structure are allowed.
Then there is the dependence on the parameter $\epsilon_2>0$, 
which measures the distance to the critical threshold value of zero.
This is not a natural model parameter, but a quantity which reflects the limitations of our Carleman estimate approach.
Finally, there are the model parameters $\vartheta_1$ and $\vartheta_2$.
One sees that while the dependence of the exponent in $C_{\sfUC}$ on $\vartheta_2$ is still of polynomial nature,
 the parameter $\vartheta_1$, i.e.~the ellipticity constant, enters in an exponential way. Both $\vartheta_1$ and $\vartheta_2$ influence $C_\sfUC$ in a monotone decreasing manner.
This is consistent with the results of Kukavica \cite[Theorem~5.1]{Kukavica-98}
where the vanishing rate  depends in a quadratic way on the sup-norm of the potential and in an exponential way on the coefficient functions of the second order part.
\end{remark}
A particular case where the assumption $\lvert \Op \psi \rvert \leqslant \lvert V\psi \rvert + \lvert \zeta \rvert$ in Theorem~\ref{thm:samplingG} is satisfied, is the case of an eigenfunction $\psi$. More generally, we formulate a corollary of Theorem~\ref{thm:samplingG} for functions in the range of some spectral projector of a self-adjoint realization of the differential expression $\Op$. We introduce the following assumption on the coefficient functions $b$ and $c$.
\begin{description}[\setleftmargin{0pt}\setlabelstyle{\bfseries}]
 \item[(SA):] We have $b = \mathrm{i} \tilde b$ and $c = \tilde c + \mathrm{i} \diver \tilde b / 2$ for some bounded $\tilde b , \tilde c \in L^\infty (\mathbb{R}^d)$.
\end{description}
We define the differential operator $H : W^{2,2} (\mathbb{R}^d) \to L^2 (\mathbb{R}^d)$, $H \psi = \Op \psi$. If assumption (SA) is satisfied, then $H$ is a self-adjoint operator in $L^2 (\mathbb{R}^d)$.
\begin{theorem}\label{cor:sampling}
Let $G > 0$ and assume (SA) and Ineq.~\eqref{ass:sampling}.
Then for all $E \in \mathbb{R}$, all $\delta \in (0,G/2)$, all $(G,\delta)$-equi\-distri\-buted sequences, and all $\psi \in \operatorname{Ran} \chi_{[E-\gamma , E + \gamma]} (H)$ with
\[
 \gamma^2 = \frac{D_1}{G^4}
  \left( \frac{\delta}{G D_2} \right)^{\frac{D_3}{\epsilon_2} \bigl( 1 +  G^{4/3} \lvert E \rvert^{2/3} + G^2 \lVert b \rVert_\infty^{2} + G^{4/3} \lVert c \rVert_\infty^{2/3} \bigr)-\ln \epsilon_2}
\]
we have
\[
 \lVert \psi \rVert_{\equi_\delta}^2  \geqslant \frac{C_{\sfUC}}{2} \lVert \psi \rVert_{\mathbb{R}^d}^2
 \quad\text{with}\quad
 C_\sfUC = D_1
  \left( \frac{\delta}{G D_2} \right)^{\frac{D_3}{\epsilon_2} \bigl( 1 +  G^{4/3} \lvert E \rvert^{2/3} + G^2 \lVert b \rVert_\infty^{2} + G^{4/3} \lVert c \rVert_\infty^{2/3} \bigr) -\ln \epsilon_2} ,
\]
where $D_1$, $D_2$ and $D_3$ are given in Theorem~\ref{thm:samplingG}.
\end{theorem}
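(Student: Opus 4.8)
The plan is to deduce Theorem~\ref{cor:sampling} from the already-established Theorem~\ref{thm:samplingG} by the standard device of regarding an element of a narrow spectral subspace of $H$ as a solution of the differential inequality $\lvert\Op\psi\rvert\le\lvert V\psi\rvert+\lvert\zeta\rvert$ with a \emph{constant} potential $V\equiv E$ and a small $L^2$-inhomogeneity $\zeta$, and then absorbing the $\zeta$-contribution into the right-hand side by means of the chosen width $\gamma$ of the spectral window.

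First I would fix $\psi\in\operatorname{Ran}\chi_{[E-\gamma,E+\gamma]}(H)$. By assumption (SA) the operator $H$ is self-adjoint with operator domain equal to $W^{2,2}(\RR^d)$; since the interval $[E-\gamma,E+\gamma]$ is bounded, $\psi$ lies in that domain, so $\psi\in W^{2,2}(\RR^d)$ and $\Op\psi=H\psi\in L^2(\RR^d)$. I would then put $\zeta:=(H-E)\psi\in L^2(\RR^d)$ and let $V:\RR^d\to\RR$ be the constant function $V\equiv E$, which is measurable and bounded with $\lVert V\rVert_\infty=\lvert E\rvert$. Almost everywhere on $\RR^d$ one has
\[
\lvert\Op\psi\rvert=\lvert H\psi\rvert=\lvert E\psi+\zeta\rvert\le\lvert V\psi\rvert+\lvert\zeta\rvert ,
\]
so $\psi,\zeta,V$ fulfil the hypotheses of Theorem~\ref{thm:samplingG}; moreover, by the spectral theorem,
\[
\lVert\zeta\rVert_{\RR^d}^2=\int_{[E-\gamma,E+\gamma]}(\lambda-E)^2\,\mathrm{d}\lVert\chi_{(-\infty,\lambda]}(H)\psi\rVert^2\le\gamma^2\lVert\psi\rVert_{\RR^d}^2 .
\]

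Applying Theorem~\ref{thm:samplingG} with this $V$ — so that $\lVert V\rVert_\infty=\lvert E\rvert$ enters the exponent and (since $D_1,D_2,D_3$ do not depend on $V$) produces precisely the constant $C_{\sfUC}$ written in the corollary — yields
\[
\lVert\psi\rVert_{\equi_\delta}^2+\delta^2G^2\lVert\zeta\rVert_{\RR^d}^2\ge C_{\sfUC}\lVert\psi\rVert_{\RR^d}^2 ,
\]
and combining this with the previous bound gives $\lVert\psi\rVert_{\equi_\delta}^2\ge\bigl(C_{\sfUC}-\delta^2G^2\gamma^2\bigr)\lVert\psi\rVert_{\RR^d}^2$. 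The value of $\gamma$ in the statement is chosen exactly so that $\gamma^2=C_{\sfUC}/G^4$; hence $\delta^2G^2\gamma^2=(\delta^2/G^2)\,C_{\sfUC}$, and since $\delta\in(0,G/2)$ we obtain $\delta^2G^2\gamma^2<C_{\sfUC}/2$, whence $\lVert\psi\rVert_{\equi_\delta}^2\ge(C_{\sfUC}/2)\lVert\psi\rVert_{\RR^d}^2$, as asserted.

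I do not anticipate a genuine analytic obstacle here: the entire substance of the result is contained in Theorem~\ref{thm:samplingG}. The only steps that require care are bookkeeping ones — checking that the exponent and prefactor obtained from Theorem~\ref{thm:samplingG} with $V\equiv E$ coincide with the $C_{\sfUC}$ and $\gamma$ stated in the corollary, in particular the identity $\gamma^2=C_{\sfUC}/G^4$, which is exactly what makes the absorption step go through under the constraint $\delta<G/2$ — together with the routine observation that membership in $\operatorname{Ran}\chi_{[E-\gamma,E+\gamma]}(H)$ forces $\psi\in W^{2,2}(\RR^d)$ once (SA) makes $H$ self-adjoint on that domain.
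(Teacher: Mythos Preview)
Your proposal is correct and follows essentially the same route as the paper: set $V\equiv E$, $\zeta=(H-E)\psi$, invoke Theorem~\ref{thm:samplingG}, bound $\lVert\zeta\rVert\le\gamma\lVert\psi\rVert$ via the spectral theorem, and absorb the inhomogeneity term using $\gamma^2=C_\sfUC/G^4$ together with $\delta<G/2$. The paper's proof is slightly terser and records the marginally sharper inequality $C_\sfUC-\delta^2G^2\gamma^2\ge(3/4)C_\sfUC$, but otherwise the arguments coincide.
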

\begin{remark}[Relation between the condition on the parameters $\vartheta_1,\vartheta_2$ and the constants $ C_\sfUC $]
\label{r:limitations}
It is natural to wonder whether the restriction
\eqref{ass:sampling},
respectively
\eqref{ass:qUC}, is necessary to derive a quantitative unique continuation estimate
as it is expressed in Theorem~\ref{thm:qUC} and Lemma~\ref{lemma:constants}, i.e.\ where we have a powerlike vanishing behavior and
where the bound on vanishing order is a polynomial function of
$\|b\|_\infty$, $\|c\|_\infty$, $\|V\|_\infty$, it seems that we have to use a Carleman weight function as in
\cite{NakicRT-15-arxiv}, see Theorem~\ref{thm:carleman}. In that case the restriction on the Lipschitz and ellipticity constant enters naturally.
\par
Of course it would be possible to use weight functions which are not of polynomial type.
We believe the that in this case it would be possible to remove the conditions on the slow variation of the coefficients \eqref{ass:qUC}, respectively \eqref{ass:sampling}.
However, it is unclear whether we could obtain with this approach quantities $C_\qUC$ in Lemma~\ref{lemma:constants} and $C_\sfUC$ in the subsequent theorems where in the exponent appear only linear and quadratic expressions of $\lVert V \rVert_\infty$, $\lVert b \rVert_\infty$, and $\lVert c \rVert_\infty$. We plan to investigate this approach in a sequel paper.
Finally, one could try to find approaches different from Carleman estimates to obtain estimates on the vanishing order.
In dimension one and two there should be enough alternative tools to achieve this goal.
\end{remark}
%

\subsection{Equidistribution theorems on $\Lambda_L$}
\label{sec:samplingL}
In this section we will consider functions $\psi \in W^{2,2} (\Lambda_L)$ and $\zeta \in L^2 (\Lambda_L)$, $L > 0$, satisfying $\lvert \Op \psi \rvert \leqslant \lvert V \psi \rvert + \lvert \zeta \rvert$ almost everywhere on $\Lambda_L$.
In order to define appropriate extensions of such functions we will assume that $\psi$ satisfies Dirichlet or periodic boundary conditions on the sides of $\Lambda_L$,
and denote by $\Dom (\Delta_L^{\mathrm{Dir}})$ and $\Dom (\Delta_L^{\mathrm{per}})$ the domain of the Laplace operator on $\Lambda_L$
subject to Dirichlet or periodic boundary conditions, respectively. Moreover, depending on the boundary conditions, we introduce for $L > 0$ the following assumptions on the coefficients $a^{ij}$, $i,j\in \{ 1,\ldots , d \}$.
\begin{description}[\setleftmargin{0pt}\setlabelstyle{\bfseries}]
 \item[(Dir)]\label{Dir}
For all $i,j \in \{1,\ldots,d\}$ with $i \not = j$ the coefficient function $a^{ij}$ vanishes on the sides of $\Lambda_L$.
 \item[(Per)]\label{Per} For all $i,j \in \{1,\ldots,d\}$ the coefficient function $a^{ij}$ satisfies periodic boundary conditions on the sides of $\Lambda_L$.
\end{description}
\begin{theorem}[Equidistribution Theorem]\label{thm:L:per}
Let $L,G > 0$ and assume \eqref{ass:sampling}, $L/G \in \NN$ is odd, and (Per).
Then for all measurable and bounded $V : \Lambda_L \to \mathbb{R}$, all  $\psi\in\Dom(\Delta_L^{\mathrm{per}})$ and $\zeta \in L^2 (\Lambda_L)$ satisfying $\lvert \Op \psi \rvert \leqslant \lvert V\psi \rvert + \lvert \zeta \rvert$ almost everywhere on $\Lambda_L$, all $\delta \in (0,G/2)$ and all $(G,\delta)$-equi\-distri\-buted sequences we have
\begin{equation*} 
\lVert \psi \rVert_{\equi_{\delta,L}}^2 + \delta^2 G^2 \lVert \zeta \rVert_{\Lambda_L}^2
\geqslant C_{\sfUC} \lVert \psi \rVert_{\Lambda_L}^2 ,
\end{equation*}
where $C_{\sfUC} = C_{\sfUC} (d ,G, \delta , \vartheta_1 , \vartheta_2 , \lVert V \rVert_\infty , \lVert b \rVert_\infty , \lVert c \rVert_\infty)$ is given in Theorem~\ref{thm:samplingG}.
\end{theorem}
\begin{theorem}[Equidistribution Theorem bis]\label{thm:L:Dir}
Let $L,G > 0$ and assume \eqref{ass:sampling}, $L/G \in \NN$ is odd, and (Dir).
Then for all measurable and bounded $V : \Lambda_L \to \mathbb{R}$, all  $\psi\in\Dom(\Delta_L^{\mathrm{Dir}})$ and $\zeta \in L^2 (\Lambda_L)$ satisfying $\lvert \Op \psi \rvert \leqslant \lvert V\psi \rvert + \lvert \zeta \rvert$ almost everywhere on $\Lambda_L$, all $\delta \in (0,G/2)$ and all $(G,\delta)$-equi\-distri\-buted sequences we have
\begin{equation*} 
\lVert \psi \rVert_{\equi_{\delta,L}}^2 + \delta^2 G^2 \lVert \zeta \rVert_{\Lambda_L}^2
\geqslant C_{\sfUC} \lVert \psi \rVert_{\Lambda_L}^2 ,
\end{equation*}
where $C_{\sfUC} = C_{\sfUC} (d , G , \delta , \vartheta_1 , \vartheta_2 , \lVert V \rVert_\infty , \lVert b \rVert_\infty , \lVert c \rVert_\infty)$ is given in Theorem~\ref{thm:samplingG}.
\end{theorem}
We define for $L > 0$ the differential operators $H_L^{\mathrm per} : \Dom (\Delta_L^{\mathrm{per}}) \to L^2 (\Lambda_L)$ and $H_L^{\mathrm Dir} : \Dom (\Delta_L^{\mathrm{Dir}}) \to L^2 (\Lambda_L)$
by $H_L^{\mathrm per} \psi = \Op \psi$ and $H_L^{\mathrm Dir} \psi = \Op \psi$. If assumption (SA) from Section~\ref{sec:samplingR} is satisfied,
then $H_L^{\mathrm per}$ and $H_L^{\mathrm Dir}$ are self-adjoint operators in $L^2 (\Lambda_L)$.
Here are two analogs of Theorem~\ref{cor:sampling} for operators defined on boxes $\Lambda_L$.
\begin{theorem}\label{cor:L:per}
Let $L,G > 0$ and assume \eqref{ass:sampling}, (SA), $L/G  \in \NN$ is odd, and (Per).
Then for all $E \in \mathbb{R}$, all $\delta \in (0,G/2)$, all $(G,\delta)$-equi\-distri\-buted sequences, and all $\psi \in \operatorname{Ran} \chi_{[E-\gamma , E + \gamma]} (H_L^{\mathrm{per}})$ with
\[
 \gamma^2 =  \frac{D_1}{G^4} \left( \frac{\delta}{G D_2} \right)^{\frac{D_3}{\epsilon_2} \bigl( 1 + G^{4/3} \lvert E \rvert^{2/3} + G^2 \lVert b \rVert_\infty^{2} + G^{4/3} \lVert c \rVert_\infty^{2/3} \bigr) -\ln \epsilon_2}
\]
we have
\[
 \lVert \psi \rVert_{\equi_{\delta,L}}^2  \geqslant \frac{C_\sfUC}{2} \lVert \psi \rVert_{\Lambda_L}^2
 \quad\text{with}\quad
 C_\sfUC = D_1 \left( \frac{\delta}{G D_2} \right)^{\frac{D_3}{\epsilon_2} \bigl( 1 + G^{4/3} \lvert E \rvert^{2/3} + G^2 \lVert b \rVert_\infty^{2} + G^{4/3} \lVert c \rVert_\infty^{2/3} \bigr) -\ln \epsilon_2}  ,
\]
where $D_1$, $D_2$ and $D_3$ are given in Theorem~\ref{thm:samplingG}.
\end{theorem}
\begin{theorem}\label{cor:L:Dir}
Let $L,G > 0$ and assume \eqref{ass:sampling}, (SA), $L/G \in \NN$ is odd, and (Dir).
Then for all $E \in \mathbb{R}$, all $\delta \in (0,G/2)$, all $(G,\delta)$-equi\-distri\-buted sequences, and all $\psi \in \operatorname{Ran} \chi_{[E-\gamma , E + \gamma]} (H_L^{\mathrm{Dir}})$ with
\[
 \gamma^2 =  \frac{D_1}{G^4} \left( \frac{\delta}{G D_2} \right)^{\frac{D_3}{\epsilon_2} \bigl( 1 + G^{4/3} \lvert E \rvert^{2/3} + G^2 \lVert b \rVert_\infty^{2} + G^{4/3} \lVert c \rVert_\infty^{2/3} \bigr) -\ln \epsilon_2}
\]
we have
\[
 \lVert \psi \rVert_{\equi_{\delta,L}}^2  \geqslant \frac{C_\sfUC}{2} \lVert \psi \rVert_{\Lambda_L}^2
 \quad\text{with}\quad
 C_\sfUC = D_1 \left( \frac{\delta}{G D_2} \right)^{\frac{D_3}{\epsilon_2} \bigl( 1 + G^{4/3} \lvert E \rvert^{2/3} + G^2 \lVert b \rVert_\infty^{2} + G^{4/3} \lVert c \rVert_\infty^{2/3} \bigr) -\ln \epsilon_2}  ,
\]
where $D_1$, $D_2$ and $D_3$ are given in Theorem~\ref{thm:samplingG}.
\end{theorem}
\begin{remark}[Additional boundary conditions for the coefficients of $A$]
In the equidistribution theorem there appear special boundary conditions on the coefficient matrix $ A \colon \mathbb{R}^d \to \mathbb{R}^{d \times d}$  which do not feature in the sampling theorem.
The reason is, that in the proof we need to extend the differential equation and its solution
to a larger set, in order to avoid dealing with regions near the boundary of the domain. It is a general principle
that in such regions estimates on gradients and other derivatives are harder to obtain than in the interior.
(In fact, we extend the equation to the whole of $\mathbb{R}^d$ for simplicity.)
Now the extension from $\Lambda_L$ to $\mathbb{R}^d$ is done with a simple mirroring procedure,
where we have to match all of the relevant interior and exterior derivatives.
For a Laplace operator this is always possible. In the case of an elliptic differential operator
with mixed derivatives present, this leads, in general, to an overdetermined system of linear equations.
The conditions (Dir) and (Per) we impose eliminate some of the equations, so that under these conditions a solution
to system of linear equations always exists. A more flexible way to extend the solutions outside of the domain $\Lambda_L$
could possibly allow the boundary  restrictions (Dir), respectively (Per), to be lifted.
However, we do not regard this as the primary challenge to improve our results in this paper,
but subordinate to the question, whether one can allow coefficients with arbitrary large
Lipschitz constants.
\end{remark}
%
%
%
\section{Proofs}
%
%
%
%
\subsection{Quantitative unique continuation}\label{ss:qUC}
The first step in our proofs is the following quantitative unique continuation principle which we formulate next.
\begin{theorem}[Quantitative unique continuation theorem] \label{thm:qUC}
Let $D_0,R \in (0,\infty)$, $\delta \in (0, 2 R)$ $K_V, \beta \in [0,\infty)$, and assume
\begin{equation} \label{ass:qUC}
 \epsilon_0 := 1 - 33 \mathrm{e} d R \vartheta_1^{6} \vartheta_2  > 0.
\end{equation}
Then there is a constant $C_{\qUC} = C_{\qUC} (d , \vartheta_1 , \vartheta_2 , R , D_0 , K_V , \lVert b \rVert_\infty, \lVert c \rVert_\infty , \delta , \beta) > 0$, such that for any $\Omega\subset \mathbb{R}^d$ open, $x \in \Omega$ and $\Theta \subset \Omega$ measurable and satisfying
 \[
 \Theta \subset \overline{B (x,R)} \setminus B (x,\delta / 2) \quad
 \text{and} \quad
 B(x,2 \mathrm{e} \vartheta_1 R+2D_0) \subset \Omega,
\]
any measurable $V : \Omega \to [-K_V , K_V]$, any $\zeta \in L^2 (\Omega)$ and $\psi \in W^{2,2} (\Omega)$ satisfying the differential inequality
\begin{equation*} 
 \lvert \Op \psi \rvert \leqslant \lvert V\psi \rvert + \lvert \zeta \rvert \quad \text{a.e.\ on $\Omega$} \quad \text{as well as} \quad \frac{\lVert \psi \rVert_\Omega^2}{\lVert \psi \rVert_\Theta^2} \leqslant \beta ,
\end{equation*}
we have
\[
 \lVert \psi \rVert_{B(x,\delta)}^2 + \delta^2 \lVert \zeta \rVert^2_\Omega \geqslant C_{\qUC} \lVert \psi \rVert_{\Theta}^2  .
\]
\end{theorem}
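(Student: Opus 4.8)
The plan is to deduce the estimate from an interpolation inequality of three-ball type, which is itself extracted from the quantitative Carleman estimate with weight adapted to $A$ (Theorem~\ref{thm:carleman} from \cite{NakicRT-15-arxiv}); the slow-variation hypothesis \eqref{ass:qUC} is precisely the condition under which that Carleman estimate is available (its constants deteriorating like $1/\epsilon_0$ as $\epsilon_0\downarrow 0$), and the a~priori bound encoded in $\beta$ is used only at the very end to close the loop. First I would reduce: after translating, take $x=0$; if $\delta\geqslant R$ then $B(0,\delta)\supset\overline{B(0,R)}\supset\Theta$ and the claim is trivial, so assume $\delta<R$. Perform the linear change of variables normalizing $A(0)$ to the identity (equivalently, work with the weight adapted to $A(0)$), so that all radii are distorted only by factors controlled by $\vartheta_1$ — the source of the $\vartheta_1$- and $\euler^{\vartheta_1}$-dependence of $C_\qUC$. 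Since every cutoff below will have support compactly contained in $B(0,2\euler\vartheta_1 R+2D_0)\subset\Omega$, it suffices to apply the Carleman estimate to $C_c^\infty$ functions and pass to $\psi\in W^{2,2}(\Omega)$ by density.

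Next I would choose a radial cutoff $\eta$ equal to $1$ on a shell containing $\overline{B(0,R)}\setminus B(0,\delta/2)$ (hence containing $\Theta$), vanishing on $B(0,\delta/2)$ and outside a ball of radius $\lesssim\euler\vartheta_1 R+D_0$, with $|\nabla\eta|\lesssim\delta^{-1}$ on the inner transition layer and $|\nabla\eta|\lesssim(\euler\vartheta_1 R+D_0)^{-1}$ on the outer one. Applying Theorem~\ref{thm:carleman} to $u=\eta\psi$ and writing $\Op u=\eta\,\Op\psi+[\Op,\eta]\psi$, I would bound $|\eta\,\Op\psi|\leqslant\eta(|V\psi|+|\zeta|)$ via the differential inequality, note that $[\Op,\eta]\psi$ is supported on the two transition layers and is pointwise $\lesssim|\nabla\eta|\,|\nabla\psi|+(|\nabla\eta|+|D^2\eta|)\,|\psi|$, and then absorb the $V^2\psi^2$ and $c^2\psi^2$ terms into the $\alpha^3$-weighted zeroth order term on the left and the $|b|^2|\nabla u|^2$ terms into the $\alpha$-weighted first order term. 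This absorption is exactly possible once $\alpha\geqslant\alpha_0:=K\bigl(1+\|V\|_\infty^{2/3}+\|b\|_\infty^{2}+\|c\|_\infty^{2/3}\bigr)$, which is where that expression in the exponent of $C_\qUC$ originates. A Caccioppoli inequality on slightly fattened layers (using the gap between $\delta/2$ and $\delta$, and the extra room $2D_0$) replaces $\|\nabla\psi\|$ on the layers by $\|\psi\|$ on enlarged layers plus a multiple of $\|\zeta\|_\Omega$, and the monotonicity of the Carleman weight in $|x|$ lets me compare its values on the inner layer, the annulus, and the outer layer. After dividing out, this yields, for all $\alpha\geqslant\alpha_0$,
\[
\|\psi\|_{\overline{B(0,R)}\setminus B(0,\delta/2)}^2\;\leqslant\;\euler^{C\alpha}\bigl(\|\psi\|_{B(0,\delta)}^2+\delta^2\|\zeta\|_\Omega^2\bigr)\;+\;\euler^{-c\alpha}\,\|\psi\|_{B(0,\,2\euler\vartheta_1 R+2D_0)}^2,
\]
with $C,c>0$ depending on $d$, $\vartheta_1$ and the ratios of the radii, and with the $1/\epsilon_0$-deterioration of the Carleman constant sitting inside $C$.

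Optimizing the right-hand side over $\alpha\in[\alpha_0,\infty)$ in the usual way (balancing the two exponentials when the optimal $\alpha$ exceeds $\alpha_0$, otherwise taking $\alpha=\alpha_0$) converts this into an interpolation inequality
\[
\|\psi\|_{\overline{B(0,R)}\setminus B(0,\delta/2)}^2\;\leqslant\;C_\ast\,\bigl(\|\psi\|_{B(0,\delta)}^2+\delta^2\|\zeta\|_\Omega^2\bigr)^{\theta}\,\|\psi\|_{B(0,\,2\euler\vartheta_1 R+2D_0)}^{2(1-\theta)},
\]
with $\theta=c/(C+c)\in(0,1)$ and a constant $C_\ast$ whose dependence on $\delta$ through the radii ratios, on $\alpha_0$, and on $\epsilon_0$ is exactly of the form producing the asserted power of $\delta$. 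Finally I would close the loop using $\Theta\subset\overline{B(0,R)}\setminus B(0,\delta/2)$, $B(0,2\euler\vartheta_1 R+2D_0)\subset\Omega$, and $\|\psi\|_\Omega\leqslant\sqrt{\beta}\,\|\psi\|_\Theta$: substituting $\|\psi\|_\Theta\leqslant\|\psi\|_{\overline{B(0,R)}\setminus B(0,\delta/2)}$ on the left and $\|\psi\|_{B(0,2\euler\vartheta_1 R+2D_0)}\leqslant\|\psi\|_\Omega\leqslant\sqrt{\beta}\,\|\psi\|_\Theta$ on the right gives $\|\psi\|_\Theta^{2}\leqslant C_\ast\beta^{1-\theta}\bigl(\|\psi\|_{B(0,\delta)}^2+\delta^2\|\zeta\|_\Omega^2\bigr)^{\theta}\|\psi\|_\Theta^{2(1-\theta)}$; dividing by $\|\psi\|_\Theta^{2(1-\theta)}$ (the estimate being trivial if $\|\psi\|_\Theta=0$) and raising to the power $1/\theta$ yields the claim with $C_\qUC=(C_\ast\beta^{1-\theta})^{1/\theta}$. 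Undoing the change of variables and bookkeeping the constants gives the stated dependence.

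The main obstacle is the middle step: one must have an adapted Carleman estimate with constants that are explicit and of the claimed polynomial-in-$\vartheta_2,\|V\|_\infty,\|b\|_\infty,\|c\|_\infty$, exponential-in-$\vartheta_1$ type, degrading only like $1/\epsilon_0$ under \eqref{ass:qUC}, and then the cutoff–commutator–absorption–Caccioppoli bookkeeping has to be carried through without losing the sharp $2/3$-powers of the zeroth order coefficients or the quadratic power of $\|b\|_\infty$. The geometric care near $\partial\Omega$ (the role of $D_0$) and the extraction of the precise $\delta$-power from the radii ratios are the remaining delicate, but essentially routine, points.
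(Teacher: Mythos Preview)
Your proposal is correct and follows essentially the same route as the paper: translate to $x=0$, apply the Carleman estimate of Theorem~\ref{thm:carleman} to $\eta\psi$ for a radial cutoff supported in $B(0,2\euler\vartheta_1 R+2D_0)$, expand the commutator $[\Op,\eta]\psi$, absorb the $V$-term by taking $\alpha$ large, control the transition-layer contributions via the Cacciopoli inequality (Lemma~\ref{lemma:Cacciopoli}), and compare the weight on the inner layer, on $\Theta$, and on the outer layer. The only organizational difference is that the paper uses $\beta$ directly in the choice of $\alpha$ (via the threshold $\alpha_3$) to absorb the outer-layer term into the $\Theta$ lower bound, whereas you first pass through a three-ball interpolation inequality and invoke $\beta$ only at the end; the two endgames are equivalent and lead to the same explicit constant~\eqref{eq:constant_qUC}.
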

Note that $\Theta$ and $B (x,\delta)$ may overlap. The constant $C_\qUC$ is given explicitly in Eq.~\eqref{eq:constant_qUC}.

\begin{remark}
Inspired by \cite{BourgainK-05} several related quantitative unique continuation principles
have been proven in the literature on (random) Schr\"odinger operators, see
\cite{GerminetK-13,BourgainK-13,Rojas-MolinaV-13,TaeuferT-17}.
The application of these estimates are manifold, including a Wegner estimate for alloy type Schr\"odinger operators with small support, the log H\"older
continuity of the integrated density of states for general Schr\"odinger operators, and localization on various energy/disorder regimes.
Our result is an extension of these results to variable
coefficient divergence type operators.

The inequalities are loosely related to so called three circle annuli inequalities as they are often used in the literature on control theory
and harmonic analysis on compact manifolds, see for instance \cite{RousseauL-12,Bakri-13}.
\end{remark}

In Appendix~\ref{sec:CqUC} we give an estimate on $C_{\qUC}$ under several assumptions on the parameters. This is formulated in the following lemma.
\begin{lemma} \label{lemma:constants}
 Let $2D_0 = R \geqslant 1$, $\delta < 2$ and $\epsilon_0 > 0$. Then,
 \[
  C_{\qUC} \geqslant C_1 \left( \frac{\delta}{C_2 R} \right)^{\frac{C_3}{\epsilon_0} \left(1 + \lVert V \rVert_\infty^{2/3} + \lVert b \rVert_\infty^{2} + \lVert c \rVert_\infty^{2/3} \right) R^3 -\ln \epsilon_0 + \ln \beta} ,
 \]
with
\[
 C_1 = \frac{K_1 \vartheta_1^{-31/2} \mathrm{e}^{-10\vartheta_1}}{(1+\vartheta_2)({\vartheta_1} + \vartheta_2^2)},
 \quad
 C_2 = 10\mathrm{e}\vartheta_1^{2},
 \quad \text{and} \quad
 C_3 = K_1  \vartheta_1^{25} \mathrm{e}^{15 \vartheta_1} (1+\vartheta_2)^2 ,
\]
where $K_1$ is a positive constant depending only on the dimension.
\end{lemma}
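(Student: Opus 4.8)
The proof is a bookkeeping exercise: one starts from the explicit expression for $C_{\qUC}$ produced in the proof of Theorem~\ref{thm:qUC}, namely Eq.~\eqref{eq:constant_qUC}, and simplifies it under the three standing hypotheses $2D_0 = R \geqslant 1$, $\delta < 2$ and $\epsilon_0 > 0$. Recall that $C_{\qUC}$ arises as a product of three kinds of factors: (i) constants coming from the Carleman estimate of Theorem~\ref{thm:carleman} with its adapted, $\vartheta_1,\vartheta_2$-dependent weight function; (ii) constants coming from the cut-off functions and the resulting three-annuli interpolation inequality for $\psi$ on the small ball $B(x,\delta)$, the annulus $\Theta$, and the large ball $B(x,2\euler\vartheta_1 R + 2D_0)$; and (iii) the a~priori factor $\beta$. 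The plan is to bound each of these in turn from below (respectively above).

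First I would reduce the geometry. Putting $2D_0 = R$ collapses all radii entering the argument --- in particular $2\euler\vartheta_1 R + 2D_0 = (2\euler\vartheta_1 + 1)R$, $R$, and $\delta/2$ --- to fixed multiples of $R$, so that the interpolation exponent $\alpha \in (0,1)$ of the three-balls inequality and its complement $1-\alpha$ can be bounded above and below by absolute constants times suitable combinations, and additive constants in logarithms such as $\ln\bigl(C R / \delta\bigr)$ become controllable using $R \geqslant 1$ and $\delta < 2$. This is where the base $\delta/(C_2 R)$ with $C_2 = 10\euler\vartheta_1^{2}$ of the final power originates, the factor $10\euler\vartheta_1$ tracing back to the Carleman radius $2\euler\vartheta_1 R$ plus slack needed for the cut-off.

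Next I would track the model-parameter dependence. Optimising the free large parameter $\tau$ in the Carleman estimate over its admissible range produces a $\tau$ of the order $\bigl(1 + \lVert V \rVert_\infty^{2/3} + \lVert b \rVert_\infty^{2} + \lVert c \rVert_\infty^{2/3}\bigr)$ times $R^3$ and a power of $\vartheta_1,\vartheta_2$; this is exactly the expression multiplying $C_3/\epsilon_0$ in the exponent. The factor $\epsilon_0^{-1}$ enters because the adapted weight function of \cite{NakicRT-15-arxiv} degenerates as $\epsilon_0 \downarrow 0$, and one checks that it multiplies only the $R^3(\cdots)$-term and not the additive $-\ln\epsilon_0 + \ln\beta$ terms; the latter come from the $\alpha$-independent prefactor of the three-balls inequality and from the a~priori bound $\lVert\psi\rVert_\Omega^2 \leqslant \beta\,\lVert\psi\rVert_\Theta^2$. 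Isolating the explicit $\vartheta_1$-dependence ($\euler^{15\vartheta_1}$, $\vartheta_1^{25}$) and $\vartheta_2$-dependence ($(1+\vartheta_2)^2$) in the exponent yields $C_3$, while the leftover multiplicative constants --- the power $\vartheta_1^{-31/2}$, the factor $\euler^{-10\vartheta_1}$, and the denominator $(1+\vartheta_2)(\vartheta_1 + \vartheta_2^2)$ coming from the Caccioppoli-type step that converts the $W^{1,2}$ bound furnished by the Carleman estimate into the $L^2$ bound on $B(x,\delta)$ --- are collected into the prefactor $C_1$.

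The main obstacle is purely the bookkeeping: one must carry every power of $\vartheta_1$, $\vartheta_2$, $R$, $\delta$, and of the three $L^\infty$-norms through the Carleman estimate, the interpolation inequality, and the elliptic regularity steps without destroying the claimed structure --- polynomial in $\vartheta_2$, exponential in $\vartheta_1$, cubic in $R$, with the $2/3$- and $2$-powers of the norms. In particular one has to verify that $\epsilon_0^{-1}$ and $\ln\beta$ appear in the exponent precisely as displayed (a multiplicative factor $1/\epsilon_0$ on the norm term, an additive $+\ln\beta$), which hinges on the multiplicative, $\alpha$-independent way $\beta$ and $\epsilon_0$ enter the constant of the three-balls inequality. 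Everything else is elementary estimation of the elementary functions involved, again using $R \geqslant 1$ and $\delta < 2$ to absorb lower-order contributions into the named constants $C_1,C_2,C_3$ and the dimensional constant $K_1$.
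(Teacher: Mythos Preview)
Your plan is correct and matches the paper's own derivation in Appendix~\ref{sec:CqUC}: one starts from the explicit formula~\eqref{eq:constant_qUC}, substitutes $2D_0=R\geqslant 1$ and $\delta<2$, bounds the auxiliary quantities $\mu,\mu_1,\rho,C_\mu$ by elementary functions of $\vartheta_1,\vartheta_2,R,\epsilon_0$, estimates the Carleman constant $C$ and the Caccioppoli constant $\Cac_{\delta/2}$, and then bounds $\alpha^*=\max\{\alpha_0,\alpha_1,\alpha_2,\alpha_3\}$ term by term; collecting yields the displayed $C_1,C_2,C_3$.

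One point of imprecision in your sketch: there is no ``interpolation exponent $\alpha\in(0,1)$'' with complement $1-\alpha$ in~\eqref{eq:constant_qUC}. The parameter $\alpha$ here is the \emph{large} Carleman parameter (what you later call $\tau$), and the exponent in $C_\qUC$ is $2\alpha^*$ with $\alpha^*\geqslant 1$; it is precisely the upper bound on $\alpha^*$---dominated by $\alpha_0$ and $\alpha_3$---that produces the factor $\frac{C_3}{\epsilon_0}(1+\lVert V\rVert_\infty^{2/3}+\lVert b\rVert_\infty^{2}+\lVert c\rVert_\infty^{2/3})R^3$. The $\ln\beta$ term indeed enters additively via $\alpha_3$ (which contains $\ln\beta$), and the $-\ln\epsilon_0$ term arises from writing the prefactor $\epsilon_0$ as $(\delta/(C_2R))^{-\ln\epsilon_0}$. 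Also, the Caccioppoli inequality is used not to convert a $W^{1,2}$ bound into an $L^2$ bound on $B(x,\delta)$, but the other way around: it controls the gradient terms $\nabla\psi^\T A\nabla\overline{\psi}$ on the two annuli $B_1,B_2$ that appear when the Leibniz rule hits the cut-off $\eta$. These are descriptive slips, not strategic ones; once you actually unpack~\eqref{eq:constant_qUC} the correct structure is forced on you.
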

The explicit form of the constant $C_{\qUC} $ may appear complicated and, indeed,
merits a detailed discussion. Among others it gives an estimate on the vanishing order of $\psi$.
Since $C_{\qUC}$  shares its important features with the constant
$C_{\sfUC}$ appearing in the next theorem, we will not discuss  $C_{\qUC}$ separately but refer to Remark \ref{rem:constant}.

Now the proof of Theorems~\ref{thm:sampling} to \ref{cor:L:Dir} follow.
%
%
%
%
\subsection{Carleman estimate and Cacciopoli inequality}
We start with a formulation of the the Cacciopoli inequality,
which may be found in \cite{Rojas-MolinaV-13} in the case where $\Op = -\Delta$.
\begin{lemma}[Cacciopoli inequality] \label{lemma:Cacciopoli}
Let $\Omega \subset \mathbb{R}^d$ be open, $V : \Omega \to \mathbb{R}$ bounded and measurable, $\zeta \in L^2 (\Omega)$, $\psi \in W^{2,2} (\Omega)$ satisfying $\lvert \Op \psi \rvert \leqslant \lvert V \psi \rvert + \lvert \zeta \rvert$ almost everywhere on $\Omega$, $0 \leqslant r_1 < r_2$, $r \in (0,\infty)$, $S = B (r_2) \setminus \overline{B (r_1)}$, $S^+ = B (r_2+r) \setminus \overline{B (r_1 - r)}$ and assume $S^+ \subset \Omega$. Then there is an absolute constant $C' \geqslant 1$ such that
 \[
  \int_{S} {\nabla \psi^\T A \nabla \overline{\psi}} \leqslant \left(2 \lVert V \rVert_\infty^2 + 1 + 2 \vartheta_1 \lVert b \rVert_\infty^2 + \frac{8\vartheta_1 C'}{r^2} + 2 {\lVert c \rVert_\infty} \right) \int_{S^+} \lvert \psi \rvert^2  + 2 \int_{S^+} \lvert \zeta \rvert^2 .
 \]
For the prefactors appearing on the right hand side we use the symbol $\Cac_{r} = \Cac_r (V , b , c, \vartheta_1)$.
\end{lemma}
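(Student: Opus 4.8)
The plan is to test the differential inequality $\lvert \Op \psi \rvert \leqslant \lvert V\psi\rvert + \lvert \zeta\rvert$ against $\eta^2 \overline{\psi}$, where $\eta$ is a smooth cut-off function equal to $1$ on the annulus $S = B(r_2)\setminus\overline{B(r_1)}$, supported in $S^+ = B(r_2+r)\setminus\overline{B(r_1-r)}$, with $\lvert\nabla\eta\rvert \leqslant C''/r$ for an absolute constant $C''$. Multiplying $\Op\psi = -\diver(A\nabla\psi) + b^\T\nabla\psi + c\psi$ by $\eta^2\overline\psi$ and integrating by parts in the leading term produces
\[
 \int \eta^2\, \nabla\psi^\T A \nabla\overline\psi
 = -2\int \eta\, \overline\psi\, \nabla\eta^\T A\nabla\psi
 - \int \eta^2 \overline\psi\, b^\T\nabla\psi
 - \int \eta^2 c\lvert\psi\rvert^2
 + \int \eta^2 \overline\psi\,(\Op\psi),
\]
where all integrals may be restricted to $S^+$ and the boundary terms vanish because $\eta$ is compactly supported in $\Omega \supset S^+$. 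Since $\eta \equiv 1$ on $S$, the left side dominates $\int_S \nabla\psi^\T A\nabla\overline\psi$, which is exactly the quantity we wish to bound.

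The next step is to estimate each term on the right by $\int_{S^+}\lvert\psi\rvert^2$ and $\int_{S^+}\lvert\zeta\rvert^2$ plus a small multiple of $\int \eta^2 \nabla\psi^\T A\nabla\overline\psi$ that can be absorbed back into the left side. For the first term, Cauchy--Schwarz (using ellipticity to write $\lvert A\nabla\psi\rvert$ in terms of $\nabla\psi^\T A\nabla\overline\psi$, costing a factor $\vartheta_1$) followed by Young's inequality gives $2\int \eta\lvert\overline\psi\rvert\,\lvert\nabla\eta\rvert\,\lvert A\nabla\psi\rvert \leqslant \tfrac14\int\eta^2\nabla\psi^\T A\nabla\overline\psi + \tfrac{C'''\vartheta_1^2}{r^2}\int_{S^+}\lvert\psi\rvert^2$; the constant $8\vartheta_1^2 C'/r^2$ in the statement is the trace of this computation. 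For the $b$-term, $\lvert\eta^2\overline\psi\, b^\T\nabla\psi\rvert \leqslant \lVert b\rVert_\infty\,\eta^2\lvert\psi\rvert\,\lvert\nabla\psi\rvert \leqslant \tfrac14\eta^2\nabla\psi^\T A\nabla\overline\psi + \vartheta_1\lVert b\rVert_\infty^2\eta^2\lvert\psi\rvert^2$ (again by ellipticity and Young); here one must be slightly careful with the $\vartheta_1$ bookkeeping to land on the clean prefactor $2\lVert b\rVert_\infty^2$ claimed, which may require $\vartheta_1 \leqslant$ something or a harmless adjustment of $C'$. The $c$-term is trivially $\lVert c\rVert_\infty\int_{S^+}\lvert\psi\rvert^2$. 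Finally, the term involving $\Op\psi$ is handled by the hypothesis: $\lvert\eta^2\overline\psi\,\Op\psi\rvert \leqslant \eta^2\lvert\psi\rvert(\lvert V\psi\rvert+\lvert\zeta\rvert) \leqslant (\lVert V\rVert_\infty + \tfrac12)\eta^2\lvert\psi\rvert^2 + \tfrac12\eta^2\lvert\zeta\rvert^2$, which after squaring-type bookkeeping yields the $2\lVert V\rVert_\infty^2+1$ and the $2\int_{S^+}\lvert\zeta\rvert^2$ contributions. Collecting the two absorbed quarter-terms leaves $\tfrac12\int\eta^2\nabla\psi^\T A\nabla\overline\psi$ on the left, and multiplying through by $2$ gives the asserted inequality with $F_r = 2\lVert V\rVert_\infty^2 + 1 + 2\lVert b\rVert_\infty^2 + 8\vartheta_1^2 C'/r^2 + 2\lVert c\rVert_\infty$.

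The only genuinely delicate point is the integration by parts itself: it must be justified for $\psi \in W^{2,2}(\Omega)$ rather than smooth $\psi$, and one must check that $\diver(A\nabla\psi)$ makes sense and that the formula $\int \eta^2\overline\psi\diver(A\nabla\psi) = -\int\nabla(\eta^2\overline\psi)^\T A\nabla\psi$ holds — this follows from the $W^{2,2}$ regularity of $\psi$, boundedness and Lipschitz regularity of $A$, and a density/approximation argument, with $\eta^2\overline\psi \in W^{1,2}$ of compact support in $\Omega$. Everything else is a routine Young-inequality juggling act, and the absolute constant $C'$ absorbs all the dimensional and numerical slack (e.g.\ the precise $\lVert\nabla\eta\rVert_\infty$, the factor from ellipticity, and any loss from massaging the prefactors into the stated form). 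I would reference \cite{Rojas-MolinaV-13} for the $\Op = -\Delta$ template and simply indicate the modifications needed to carry the variable matrix $A$ and the first-order term $b$ through the estimate.
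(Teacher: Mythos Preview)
Your approach is essentially the same as the paper's: choose a cutoff $\eta$ with $\lvert\nabla\eta\rvert\leqslant\sqrt{C'}/r$, test against $\eta^2\overline\psi$, integrate by parts, and absorb the cross terms by Young's inequality with suitably chosen parameters. The one organizational difference is that the paper opens with the elementary inequality $\lVert\eta\Op\psi\rVert^2 + \lVert\eta\psi\rVert^2 \geqslant 2\RE\langle\Op\psi,\eta^2\psi\rangle$ (i.e.\ $\lVert x-y\rVert^2\geqslant 0$) rather than estimating $\int\eta^2\overline\psi\,\Op\psi$ pointwise; this places the term $\lVert\eta\Op\psi\rVert^2$ on the left from the outset and delivers the prefactor $2\lVert V\rVert_\infty^2$ and the $2\int\lvert\zeta\rvert^2$ directly, without the extra ``squaring-type bookkeeping'' you allude to.
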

\begin{proof}
We use $0 \leqslant \lVert x-y \rVert^2 = \lVert x \rVert^2 + \lVert y \rVert^2 - 2 \RE \langle x,y \rangle$, Green's theorem and Cauchy Schwarz, and obtain for all real-valued $\eta \in C_{\mathrm{c}}^\infty (\Omega)$
\begin{align*}
  \lVert \eta \Op \psi \rVert^2 &+ \lVert \eta \psi \rVert^2
\geqslant 2 \RE \langle \Op \psi , \eta^2 \psi \rangle \\
&= 4 \RE \langle \eta \nabla \psi , A \psi \nabla \eta \rangle
+
2 \langle \nabla \psi , \eta^2 A \nabla \psi \rangle + 2\RE \langle b^\T \nabla \psi , \eta^2 \psi \rangle + 2 {\RE}\langle c \psi , \eta^2 \psi \rangle \\
  &\geqslant {2\langle \nabla \psi , \eta^2 A \nabla \psi \rangle} -4 \lVert \eta A^{1/2} \nabla \psi \rVert  \lVert \psi A^{1/2} \nabla \eta \rVert  - 2 \lVert \eta b^\T \nabla \psi \rVert \lVert \eta \psi \rVert + 2{\RE} \langle c \psi , \eta^2 \psi \rangle  .
\end{align*}
Since $2ab \leqslant s a^2 + s^{-1} b^2$, we have for all $s,t > 0$
\begin{multline*}
 \lVert \eta \Op \psi \rVert^2 + \lVert \eta \psi \rVert^2
  \\ \geqslant
  - \frac{2}{s}\lVert \psi A^{1/2} \nabla \eta \rVert^2 + (2-2s){\langle \nabla \psi , \eta^2 A \nabla \psi \rangle} - \frac{1}{t}\lVert \eta b^\T \nabla \psi \rVert^2 - t\lVert \eta \psi \rVert^2 + 2{\RE}\langle c \psi , \eta^2 \psi \rangle .
\end{multline*}
We choose $s = 1/4$, $t = 2 \vartheta_1 \lVert b \rVert_\infty^2$ and obtain by using $\lVert \eta b^\T \nabla \psi \rVert^2 \leqslant \lVert b \rVert_\infty^2 \lVert \eta \nabla \psi \rVert^2$
\begin{equation}\label{eq:gradient}
 {\langle \nabla \psi , \eta^2 A \nabla \psi \rangle}
 \leqslant \lVert \eta \Op \psi \rVert^2 +(1+2 \vartheta_1 \lVert b \rVert_\infty^2) \lVert \eta \psi \rVert^2 + 8\lVert \psi A^{1/2} \nabla \eta \rVert^2 + 2 \lVert c \rVert_\infty \langle  \psi , \eta^2 \psi \rangle .
\end{equation}
Now we choose a radially symmetric function $\eta \in C_{\mathrm c}^\infty (\Omega)$ with $\eta \in [0,1]$, $\supp \eta = S^+$, $\eta \equiv 1$ on $S$, and $\lvert \nabla \eta \rvert \leqslant \sqrt{C'} / r$
with some absolute constant $C' > 0$. Hence, by our assumption $\lvert \Op\psi \rvert \leqslant \lvert V\psi \rvert + \lvert \zeta \rvert$, ellipticity, and our Lipschitz condition, the statement of the lemma follows from Ineq.~\eqref{eq:gradient}.
\end{proof}
Now we cite a Carleman estimate from \cite{NakicRT-15-arxiv}. A particular feature of this Carleman estimate is that the weight function and the corresponding constants are given explicitly. For us it is of particular importance to track the dependence on the ellipticity and Lipschitz constants $\vartheta_1$ and $\vartheta_2$, respectively.
Carleman estimates of this type have been given before in \cite{EscauriazaV-03,KenigSU-11} and \cite{BourgainK-05}.
However, they did not derive the dependence on the weight function and the  constants in the Carleman estimate on $\vartheta_1$ and $\vartheta_2$. Or they concerned only the pure Laplacian in the first place.
\par
For $\mu,\rho > 0$ we introduce a function $w_{\rho , \mu} : \mathbb{R}^d \to [0,\infty)$ by
\[
 w_{\rho , \mu} (x) := \varphi (\sigma (x / \rho)) ,
\]
where $\sigma:\mathbb{R}^d \to [0,\infty)$ and $\varphi : [0,\infty) \to [0,\infty)$ are given by
\begin{equation} \label{eq:weight}
\sigma(x):= \left(x^\T A(0)^{-1} x \right)^{1/2}, \quad \text{and} \quad
\varphi(r):= r \exp \left( - \int_0^r\frac{1 - e^{-\mu t}}{t} \mathrm{d} t \right).
\end{equation}
We recall the upper and lower bounds on $\varphi$ given in \cite{NakicRT-15-arxiv}. It is obvious that $\varphi (r) \le r$. 
For a lower bound we distinguish three cases. If $\sqrt{\vartheta_1} \mu \leqslant 1$ we have $\varphi (r) \geqslant r \mathrm{e}^{-\mu r}$. If $\sqrt{\vartheta_1} \mu \geqslant 1$ and $r \leqslant 1/\mu$ we use $1-\mathrm{e}^{\mu t} \leqslant \mu t$ and obtain $\varphi (r) \geqslant r \mathrm{e}^{-{\mu r}} \geqslant r (\mathrm{e} \sqrt{\vartheta_1} \mu)^{-1}$. If $\sqrt{\vartheta_1} \mu \geqslant 1$ and $r > 1/\mu$ we split the integral according to $[0,1/\mu]$ and $[1/\mu , r]$. On the first component we use $1-\mathrm{e}^{-\mu t} \leqslant \mu t$, on the second one $1-\mathrm{e}^{-\mu t} \leqslant 1$. This way we obtain $\varphi (r) \geqslant 1/(\mathrm{e} \mu)$ if $r > 1/\mu$. It follows that for all $r \in [0,\sqrt{\vartheta_1}]$
\[
 \varphi (r) \geqslant \frac{r}{\mu_1} ,
 \quad \text{where} \quad 
 \mu_1 = \begin{cases}
          \mathrm{e}^{\sqrt{\vartheta_1} \mu}& \text{if $\sqrt{\vartheta_1} \mu \leqslant 1$,}\\
          \mathrm{e} \sqrt{\vartheta_1} \mu& \text{if $\sqrt{\vartheta_1} \mu \geqslant 1$}.
         \end{cases} 
\]
Hence, the function $w_{\rho , \mu}$ satisfies
\begin{equation} \label{eq:weightbound2}
 \forall x \in B(\rho) \colon \quad
  \frac{\vartheta_1^{-1/2} \lvert x \rvert}{\rho \mu_1} \leqslant \frac{\sigma (x)}{\rho  \mu_1}  \leqslant
 w_{\rho , \mu}(x)
 \leqslant
 \frac{\sigma (x)}{\rho}
 \leqslant
 \frac{\sqrt{\vartheta_1} \lvert x \rvert}{\rho} ,
\end{equation}
where
\[
\mu_1 = \begin{cases}
          \mathrm{e}^{\sqrt{\vartheta_1} \mu}& \text{if $\sqrt{\vartheta_1} \mu \leqslant 1$,}\\
          \mathrm{e} \sqrt{\vartheta_1} \mu& \text{if $\sqrt{\vartheta_1} \mu > 1$}.
         \end{cases}
\]
\begin{theorem}[\cite{NakicRT-15-arxiv}] \label{thm:carleman}
Let $\rho > 0$ and $\mu > 33 d \vartheta_1^{11/2} \vartheta_2 \rho$.
Then there are constants $\alpha_0 = \alpha_0 (d, \rho  , \vartheta_1 , \vartheta_2, \mu , \lVert b \rVert_\infty , \lVert c \rVert_\infty) > 0$ and $C = C (d , \vartheta_1 , \rho \vartheta_2, \mu) > 0$, such that for all $\alpha \geqslant \alpha_0$ and all $u \in W^{2,2} (\mathbb{R}^d)$ with support in $B (\rho) \setminus \{0\}$ we have
\begin{equation*}
 \int_{\mathbb{R}^d} \left( \alpha \rho^2 w_{\rho , \mu}^{1-2\alpha} \nabla u^\T A \nabla u + \alpha^3 w_{\rho , \mu}^{-1-2\alpha} \lvert u \rvert^2  \right) \leqslant C \rho^4 \int_{\mathbb{R}^d} w_{\rho , \mu}^{2-2\alpha} \lvert \Op u \rvert^2 .
\end{equation*}
\end{theorem}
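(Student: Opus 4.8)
The plan is to establish the Carleman estimate of Theorem~\ref{thm:carleman} by the classical conjugation-and-commutator scheme, the only genuinely delicate point being to track how the threshold $\alpha_0$ and the constant $C$ depend on the ellipticity constant $\vartheta_1$, the Lipschitz constant $\vartheta_2$, the parameter $\mu$, and the lower-order coefficients $b,c$. After the substitution $x \mapsto \rho x$ the claim becomes the corresponding estimate with $\rho = 1$, and the explicit powers of $\rho$ in the statement are exactly what this rescaling produces. Since $\lVert b^\T \nabla u \rVert \leqslant \lVert b \rVert_\infty \lVert \nabla u \rVert$ and $\lVert c u \rVert \leqslant \lVert c \rVert_\infty \lVert u \rVert$, and since $w_{1,\mu}$ is comparable to $\lvert x \rvert$ up to factors involving $\vartheta_1$ and $\mu$ (by the bounds recorded before the theorem), the weighted $L^2$-norms of $b^\T \nabla u$ and $c u$ are, once $\alpha$ is large enough, absorbed into a small fraction of the two terms $\alpha w_{1,\mu}^{1-2\alpha} \nabla u^\T A \nabla u$ and $\alpha^3 w_{1,\mu}^{-1-2\alpha} \lvert u \rvert^2$ on the left-hand side. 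It therefore suffices to prove the estimate for the pure divergence-form operator $\Op_0 u := -\diver(A \nabla u)$; the dependence of $\alpha_0$ on $\lVert b \rVert_\infty$ and $\lVert c \rVert_\infty$ (entering as $\lVert b \rVert_\infty^{2}$ and $\lVert c \rVert_\infty^{2/3}$, consistently with the exponent of $C_\sfUC$) is produced entirely at this step.

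For $\Op_0$ I would conjugate by the weight: writing $v := w_{1,\mu}^{-\alpha} u$ and $P_\alpha := w_{1,\mu}^{-\alpha}\,\Op_0\,(w_{1,\mu}^{\alpha}\,\cdot\,)$, the asserted inequality is equivalent to a lower bound for $\lVert P_\alpha v \rVert$ in terms of suitably weighted gradient- and $L^2$-norms of $v$. Decomposing $P_\alpha = P_\alpha^+ + P_\alpha^-$ into its formally symmetric and skew-symmetric parts in $L^2$ gives
\[
 \lVert P_\alpha v \rVert^2 = \lVert P_\alpha^+ v \rVert^2 + \lVert P_\alpha^- v \rVert^2 + \langle [P_\alpha^+ , P_\alpha^-]\, v , v \rangle ,
\]
and everything reduces to showing that the commutator term, reinforced by a controlled portion of $\lVert P_\alpha^\pm v \rVert^2$, dominates the required weighted norms of $v$. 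Here the shape of the weight is decisive. Freezing the coefficients at the center, $A_0 := A(0)$ (elliptic with the same constant $\vartheta_1$), the natural radial variable for $-\diver(A_0 \nabla \cdot)$ is $\sigma$, which satisfies $\nabla \sigma^\T A_0 \nabla \sigma \equiv 1$; composing it with $\varphi$ convexifies it, and the identity $\varphi'(r)/\varphi(r) = \euler^{-\mu r}/r$ built into the definition of $\varphi$ is precisely what renders the Hessian-type contributions to the commutator positive with a lower bound proportional to $\mu$, while keeping $\varphi(r)$ comparable to $r$. I would carry out this computation first for the \emph{frozen} operator $-\diver(A_0 \nabla \cdot)$, obtaining a clean positive lower bound with constants depending only on $\vartheta_1$ and $\mu$, and using a Hardy inequality to absorb the singularity of the weight at the origin --- which is licit since $u$ is supported in $B(\rho) \setminus \{0\}$.

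Finally I would reinstate the variable coefficients by writing $A(x) = A_0 + (A(x) - A_0)$ with $\lVert A(x) - A_0 \rVert_\infty \leqslant \vartheta_2 \lvert x \rvert \leqslant \vartheta_2$ on $B_1$. The perturbation $A(x) - A_0$ (and, wherever it gets differentiated, the Lipschitz bound $\vartheta_2$) generates error terms in $P_\alpha$, in $P_\alpha^\pm$, and in the commutator; each carries a factor proportional to $\vartheta_2$ and no worse powers of $\alpha$ and of $w_{1,\mu}$ than the main terms already have. Requiring the sum of these errors to be a small fraction of the positive bound from the frozen computation is exactly what forces $\mu$ to exceed $33 d \vartheta_1^{11/2} \vartheta_2 \rho$ in the original scaling: the convexity gain scales like $\mu$, the perturbative loss like $\vartheta_1^{11/2} \vartheta_2 \rho$, and the numerical factor $33d$ is the bookkeeping constant from counting the error terms and their prefactors. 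The main obstacle is precisely this quantitative bookkeeping --- coordinating the free parameters (how much of $\lVert P_\alpha^\pm v \rVert^2$ is spent on absorption, the weights used in the Young inequalities, and the choice of $\alpha_0$) so that the commutator lower bound degrades only polynomially in $\vartheta_1$ and stays linear in $\mu$, while every variable-coefficient and lower-order error is strictly dominated by it. This is what produces the explicit but cumbersome constants $\alpha_0 = \alpha_0(d,\rho,\vartheta_1,\vartheta_2,\mu,\lVert b \rVert_\infty,\lVert c \rVert_\infty)$ and $C = C(d,\vartheta_1,\rho\vartheta_2,\mu)$; the abstract scheme itself is standard and is carried out in full detail in \cite{NakicRT-15-arxiv}.
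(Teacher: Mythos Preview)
The paper does not prove Theorem~\ref{thm:carleman}; it is quoted verbatim from \cite{NakicRT-15-arxiv} and used as a black box, with only the explicit upper bounds on $C$ and $\alpha_0$ recorded in the subsequent remark. So there is no ``paper's own proof'' to compare against.

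That said, your sketch is a faithful outline of the strategy actually carried out in \cite{NakicRT-15-arxiv}: rescale to $\rho=1$; conjugate $\Op_0$ by $w_{1,\mu}^{-\alpha}$ and split into symmetric and antisymmetric parts; exploit the identity $\varphi'(r)/\varphi(r)=\euler^{-\mu r}/r$ to make the commutator positive for the frozen operator $-\diver(A_0\nabla\cdot)$; control the variable-coefficient remainder $A(x)-A_0$ by the Lipschitz bound, which is precisely where the threshold $\mu>33d\vartheta_1^{11/2}\vartheta_2\rho$ arises; and finally absorb the first-order and zero-order terms $b^\T\nabla u$, $cu$ into the left-hand side by enlarging $\alpha_0$. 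The one point I would flag is that ``support in $B(\rho)\setminus\{0\}$'' for a $W^{2,2}$ function does not by itself justify the Hardy-type step near the origin; in the actual argument one first proves the inequality for $u\in C_c^\infty(B(\rho)\setminus\{0\})$ and then passes to the $W^{2,2}$ closure, so your appeal to a Hardy inequality should be understood in that sense. Otherwise the plan matches the cited proof.
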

\begin{remark}
Note that $\varphi$ in \eqref{eq:weight} is a product of two terms: The first is linear and universal, the second contains the dependence on the operator via the parameter $\mu$.
If one could derive a Carleman estimate valid for all $A$ as in \eqref{eq:elliptic}
with a weight function independent of
$\vartheta_1$ and $\vartheta_2$ , we would be able to drop the smallness
assumptions \eqref{ass:samplingG=1}, \eqref{ass:sampling},  and \eqref{ass:qUC}.
However, we were neither able to prove
nor to find in the literature such a Carleman estimate.
\end{remark}

\begin{remark}
Upper bounds for the constants $C$ and $\alpha_0$ are known explicitly, see \cite{NakicRT-15-arxiv}. In the case where $b$ and $c$ are identically zero, the conclusion of Theorem~\ref{thm:carleman} holds with $C = \tilde C$ and $\alpha_0 = \tilde \alpha_0$ satisfying the upper bounds
\begin{align*}
 \tilde C &\leqslant 2d^2  \vartheta_1^8 \mathrm{e}^{4\mu\sqrt{\vartheta_1}} \mu_1^4 \left( 3 \mu^2 + (9\rho \vartheta_2 + 3)\mu + 1 \right) C_\mu^{-1} \\
 \intertext{and}
 \tilde \alpha_0 & \leqslant 11 d^4 \vartheta_1^{33/2} \mathrm{e}^{6\mu\sqrt{\vartheta_1}}   \mu_1^6  (3\rho\vartheta_2 + \mu + 1)^2 \left(1  + \mu (\mu + 1) C_\mu^{-1}   \right) ,
\end{align*}
where $C_\mu = \mu - 33 d \vartheta_1^{11/2} \vartheta_2 \rho$. In the general case where $b,c \in L^\infty (B(\rho))$ the conclusion of the theorem holds with
\[
 C = 6 \tilde C
 \quad \text{and} \quad
 \alpha_0 =\max \left\{ \tilde\alpha_0, C \rho^2  \lVert  b  \rVert_{\infty}^2 \vartheta_1^{3/2},
C^{1/3} \rho^{4/3} \lVert c \rVert_{\infty}^{2/3}\sqrt{\vartheta_1}   \right\} .
\]
\end{remark}
%
%
%
%
%
\subsection{Proof of Theorem~\ref{thm:qUC}}
\begin{proof}[Proof of Theorem~\ref{thm:qUC}]
We follow \cite[Proof of Theorem~3.1]{Rojas-MolinaV-13}.
 For convenience we assume $x = 0$, hence $\Omega \supset B (2 \mathrm{e} \vartheta_1 R + 2 D_0)$. The general case follows by translation. We choose a function $\eta : \mathbb{R}^d \to [0,1]$, $\eta \in C_{\mathrm c}^\infty (\Omega)$, depending only on $\lvert x \rvert$ satisfying
 \begin{align}
 \eta (x)
 &=
 \begin{cases}
  0 & \text{if $x \in \overline{B(\delta / 4)} \cup B(2 \mathrm{e} \vartheta_1 R + D_0)^{\mathrm c}$}, \\
  1 & \text{if $x \in \overline{B(2 \mathrm{e} \vartheta_1 R)} \setminus B(\delta / 2)$},
 \end{cases}    \nonumber \\
 \intertext{and}
 \max \left( \lVert \nabla \eta \rVert_\infty , \lVert \Delta \eta \rVert_\infty \right) &\leqslant
 \begin{cases}
  \left(M / \delta \right)^2 & \text{if $x \in \overline{B(\delta / 2)} \setminus B(\delta / 4)$}, \\
  \left(M / D_0\right)^2 & \text{if $x \in \overline{B(2 \mathrm{e} \vartheta_1 R+D_0)} \setminus B(2\mathrm{e} \vartheta_1 R )$}
 \end{cases} \label{eq:prop_eta} ,
\end{align}
where $M = M(d) \in (0,\infty)$ is a constant depending only on the dimension.
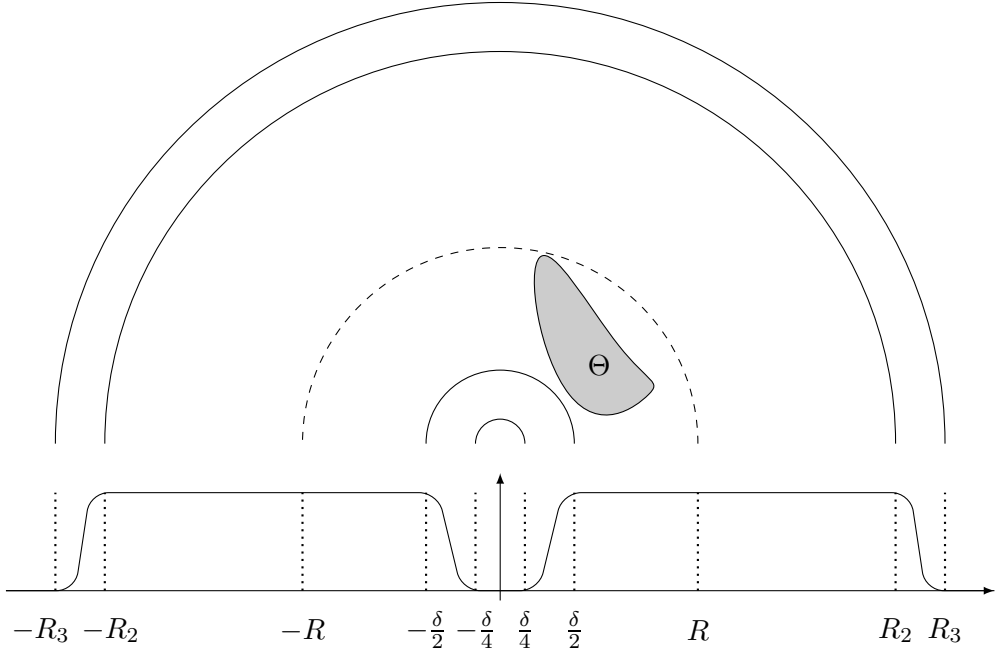
\begin{figure}[ht]\centering
\begin{tikzpicture}[yscale=1.3,xscale=1.3]
 \draw[-latex] (-5,0)--(5,0);
 \draw[-latex] (0,-0.1)--(0,1.2);
 \draw[rounded corners=7pt]
 (-5,0)--(-4.3,0)--(-4.15,1)--(-0.63,1)--(-0.39,0)--(0.39,0)--(0.63,1)--(4.15,1)--(4.3,0)--(5,0);
 \draw[dotted, thick] (-0.25,-0)--(-0.25,1);
 \draw[dotted, thick] (0.25,-0)--(0.25,1);
 \draw (0.25,-0.4) node {$\frac{\delta}{4}$};
 \draw (-0.25,-0.4) node {$-\frac{\delta}{4}$};
 \draw[dotted, thick] (-0.75,-0)--(-0.75,1);
 \draw[dotted, thick] (0.75,-0)--(0.75,1);
 \draw (0.75,-0.4) node {$\frac{\delta}{2}$};
 \draw (-0.75,-0.4) node {$-\frac{\delta}{2}$};
 \draw[dotted, thick] (-2,-0)--(-2,1);
 \draw[dotted, thick] (2,-0)--(2,1);
 \draw (2,-0.4) node {$R$};
 \draw (-2,-0.4) node {$-R$};
 \draw[dotted, thick] (-4,-0)--(-4,1);
 \draw[dotted, thick] (4,-0)--(4,1);
 \draw (4,-0.4) node {$R_2$};
 \draw (-3.94,-0.4) node {$-R_2$};
 \draw[dotted, thick] (-4.5,-0)--(-4.5,1);
 \draw[dotted, thick] (4.5,-0)--(4.5,1);
 \draw (4.5,-0.4) node {$R_3$};
 \draw (-4.65,-0.4) node {$-R_3$};
\begin{scope}[yshift=-0.5cm]
 \draw (0.25,2) arc (0:180:0.25);
 \draw (0.75,2) arc (0:180:0.75);
 \draw[thin, dashed] (2,2) arc (0:180:2);
 \draw (4,2) arc (0:180:4);
 \draw (4.5,2) arc (0:180:4.5);
 \draw [black,fill=black!20!white] plot [smooth cycle, tension=1] coordinates {(0.7,2.5) (1.5,2.5)  (1.2,3) (0.4,3.9)};
 \draw (1, 2.8) node {$\Theta$};
\end{scope}
\end{tikzpicture}
\caption{Cutoff function $\eta$ and geometric setting of Theorem~\ref{thm:qUC}\label{fig:cutoff}. Here, $R_2 = 2 \mathrm{e} \vartheta_1 R$ and $R_3 = 2 \mathrm{e} \vartheta_1 R + 2D_0 = \rho$.}
\end{figure}
See Fig.~\ref{fig:cutoff} for an illustration of the geometric setting. Note that $\eta \equiv 1$ on $\Theta$ by assumption. Recall that $\epsilon_0 = 1 - 33 \mathrm{e} R d \vartheta_1^6 \vartheta_2$ and set
\begin{equation*}
\rho := 2 \mathrm{e} \vartheta_1 R + 2D_0
\quad \text{and} \quad
\mu := 33 d \rho \vartheta_1^{11/2} \vartheta_2 +  \frac{\rho \epsilon_0}{2 \mathrm{e} R \sqrt{\vartheta_1}}  ,
\end{equation*}
Since $\epsilon_0 > 0$ by assumption, we have $\mu > 33 d \rho \vartheta_1^{11/2} \vartheta_2$. Hence we can apply the Carleman estimate from Theorem~\ref{thm:carleman} with these choices of $\rho$ and $\mu$ to the function $u = \eta \psi$ and obtain for all $\alpha \geqslant \alpha_0 = \alpha_0 (d,\rho, \vartheta_1 , \vartheta_2 , \mu)$
\begin{equation*}
 I_1:= \int_{B(\rho)} \alpha^3 w^{-1-2\alpha} \lvert \eta \psi  \rvert^2  \leqslant \rho^4 C \int_{B(\rho)} w^{2-2\alpha} \lvert \Op(\eta \psi) \rvert^2  ,
\end{equation*}
where $C = C (d , \vartheta_1 , \rho \vartheta_2 , \mu) > 0$ and $w = w_{\rho , \mu}$. 
The Leibniz rule and $(a+b+c)^2 \leqslant 3(a^2+b^2+c^2)$ yields that $I_1$ is bounded by
\begin{align}
 I_1 & =  \rho^4 C \int_{B(\rho)} w^{2-2\alpha} \biggl\lvert \lvert \Op_c \eta \rvert \psi + (\Op \psi)\eta + 2 \sum_{i,j=1}^d a^{ij} (\partial_i \eta)(\partial_j \psi) \biggr\rvert^2  \nonumber \\
& \leqslant 3 \rho^4 C  \int_{B(\rho)} w^{2-2\alpha} \biggl( \lvert \Op_c \eta  \rvert^2 \lvert \psi \rvert^2 + \lvert \Op \psi \rvert^2 \eta^2 + 4 \Bigl \lvert \sum_{i,j=1}^d a^{ij} (\partial_i \eta)(\partial_j \psi) \Bigr \rvert^2 \biggr)   \label{eq:Leibnitz} ,
\end{align}
where $\Op_c \eta = -\diver (A \nabla \eta) + b^\T \nabla \eta$.
The pointwise estimate $\lvert \Op \psi \rvert \leqslant \lvert V \psi \rvert + \lvert \zeta \rvert$, $\lVert V \rVert_\infty \leqslant K_V$, and $w \leqslant \sqrt{\vartheta_1}$ on $B(\rho)$ gives
\begin{equation} \label{eq:subsolution}
 \int_{B(\rho)} w^{2-2\alpha} \lvert \Op \psi \rvert^2\eta^2  \leqslant   2 K_V^2 \vartheta_1^{3/2} \int_{B (\rho)} w^{-1-2\alpha}  \lvert \eta \psi \rvert^2  +
  2 \int_{B(\rho)} w^{2-2\alpha}  \lvert \eta \zeta \rvert^2 .
\end{equation}
From Ineq.~\eqref{eq:Leibnitz} and Ineq.~\eqref{eq:subsolution} we obtain for all $\alpha \geqslant \alpha_0$
\begin{multline*}
 \left[ \frac{\alpha^3}{3 \rho^4 C} - 2 K_V^2 \vartheta_1^{3/2}  \right] \int_{B(\rho)} \left( w^{-1-2\alpha} \lvert \eta \psi \rvert^2  \right) \\
\leqslant   \int_{B(\rho)} w^{2-2\alpha} \Biggl( \lvert \Op_c \eta \rvert^2 \lvert \psi \rvert^2 + 4 \Bigl\lvert \sum_{i,j=1}^d a^{ij} (\partial_i \eta)(\partial_j \psi) \Bigr\rvert^2 + 2 \lvert \eta \zeta \rvert^2 \Biggr)  =: I_2 + 2 \int_{B(\rho)} w^{2-2\alpha} \lvert \eta \zeta \rvert^2  .
\end{multline*}
(Hence, we have subsumed one term from the right hand side of the Carleman estimate in the left hand side. This was only possible because we did not apply Ineq.~\eqref{eq:weightbound2} to this term yet.)
Note that
\begin{equation}\label{eq:later}
2 \int_{B (\rho)} w^{2-2\alpha} \lvert \eta \zeta \rvert^2 \leqslant
2 \left(\frac{4\rho  \mu_1 \sqrt{\vartheta_1}}{\delta} \right)^{2\alpha - 2}
  \lVert \zeta \rVert^2_{\Omega} ,
\end{equation}
which will be used later.
Additionally to $\alpha \geqslant \alpha_0$ we choose
\begin{equation*} 
 \alpha \geqslant \sqrt[3]{16 \rho^4 C K_V^2 \vartheta_1^{3/2}} =:\alpha_1 .
\end{equation*}
This ensures that
\[
I_2 + 2 \int_{B (\rho)} w^{2-2\alpha} \lvert \eta \zeta \rvert^2  \geqslant \frac{5}{24} \frac{\alpha^3 }{\rho^4 C }  \int_{B (\rho)} \left( w^{-1-2\alpha} \lvert \eta \psi \rvert^2  \right)   .
\]
Since $\eta \equiv 1$ on $\Theta$ and by our bound on the weight function we have
\begin{equation} \label{eq:I2lower}
 I_2 + 2 \int_{B (\rho)} w^{2-2\alpha} \lvert \eta \zeta \rvert^2  \geqslant \frac{5}{24} \frac{\alpha^3}{\rho^4 C}  \left( \frac{\rho}{\sqrt{\vartheta_1} R} \right)^{1+2\alpha} \lVert \psi \rVert_\Theta^2 .
\end{equation}
Now we turn to an upper bound on $I_2$.
Since $a^{ij} = a^{ji}$ and $A (x) = (a^{ij} (x))_{i,j=1}^d$ is positive definite for all $x \in B(\rho)$, we can apply Cauchy Schwarz and obtain
 \begin{align*}
\Bigl\lvert \sum_{i,j=1}^d a^{ij} (\partial_i \eta)(\partial_j \psi) \Bigr \rvert^2 &\leqslant \Bigl( \sum_{i,j=1}^d a^{ij} (\partial_i \eta)(\partial_j \eta) \Bigr) \Bigl( \sum_{i,j=1}^d a^{ij} (\partial_i \overline{\psi})(\partial_j \psi) \Bigr)
 {\leqslant \vartheta_1 \lvert \nabla\eta\rvert^2(\nabla\psi^\T A \nabla\overline{\psi})}
  .
\end{align*}
Since $\Op_c \eta \not = 0$ only on $\supp \nabla \eta$ we have
\[
  I_2 \leqslant \int_{\supp \nabla \eta} w^{2-2\alpha} \Biggl( \lvert \Op_c \eta \rvert^2 \lvert \psi \rvert^2 + 4 {\vartheta_1 \lvert \nabla\eta\rvert^2 (\nabla\psi^\T A \nabla\overline{\psi})}  \Biggr)   .
\]
We split the integral according to the two components
\[
 B_1 = \left\{ x \in \mathbb{R}^d \colon \frac{\delta}{4} \leqslant \lvert x \rvert \leqslant \frac{\delta}{2} \right\}
 \quad \text{and} \quad
 B_2 = \left\{ x \in \mathbb{R}^d \colon 2 \mathrm{e} \vartheta_1 R \leqslant \lvert x \rvert \leqslant 2 \mathrm{e} \vartheta_1 R + D_0 \right\}
\]
of $\supp \nabla \eta$ and obtain by using the property~\eqref{eq:prop_eta} of the function $\eta$
\begin{multline*}
 I_2 \leqslant \int_{B_1} w^{2-2\alpha}
       \left( (\Op_c \eta)^2 \lvert \psi \rvert^2 + 4 {\vartheta_1} \left( \frac{M}{\delta} \right)^4 {\nabla\psi^\T A \nabla\overline{\psi}} \right)
       \\
 +  \int_{B_2} w^{2-2\alpha} \Biggl( (\Op_c \eta)^2 \lvert \psi \rvert^2 + 4 {\vartheta_1} \left( \frac{M}{D_0} \right)^4 {\nabla\psi^\T A \nabla\overline{\psi}} \Biggr)   .
\end{multline*}
On $B_1$ we use the general bound \eqref{eq:weightbound2} on the weight function $w$. In order to estimate the weight function on $B_2$, we note that for $r \geqslant 1/\mu$ we have by using $1 - \mathrm{e}^{-\mu t} \leqslant \mu t$ and $1 - \mathrm{e}^{-\mu t} < 1$ the bound
\[
 \varphi (r) \geqslant r \exp \left( -\int_0^{1/\mu} \mu \mathrm{d} t \right) \exp \left( -\int_{1/\mu}^r \frac{1}{t} \mathrm{d} t \right) = \frac{1}{\mathrm{e} \mu} .
\]
Since $\sigma (x) \geqslant \vartheta_1^{-1/2} \lvert x \rvert$, this implies that for for all $x \in B(\rho)$ with $\lvert x \rvert \geqslant \sqrt{\vartheta_1} \rho / \mu$
 we have
\begin{equation} \label{eq:weightbound3}
 w (x) = \varphi (\sigma (x/\rho)) \geqslant \frac{1}{\mathrm{e} \mu} .
\end{equation}
Since $2 \mathrm{e} \vartheta_1 R \geqslant \sqrt{\vartheta_1} \rho / \mu$ by our choice of $\mu$, we can use the bound \eqref{eq:weightbound3} for all $x \in B_2$. Hence, for $\alpha \geqslant 1 =: \alpha_2$ we arrive at
\begin{multline*}
 I_2 \leqslant \left( \frac{4 \rho \mu_1 \sqrt{\vartheta_1}}{\delta} \right)^{2 \alpha - 2} \int_{B_1}
       \left( (\Op_c \eta)^2 \lvert \psi \rvert^2 + 4 {\vartheta_1} \left( \frac{M}{\delta} \right)^4 {\nabla\psi^\T A \nabla\overline{\psi}} \right)
       \\
 + \left( \mathrm{e} \mu \right)^{2 \alpha - 2} \int_{B_2} \Biggl( (\Op_c \eta)^2 \lvert \psi \rvert^2 + 4 {\vartheta_1} \left( \frac{M}{D_0} \right)^4 {\nabla\psi^\T A \nabla\overline{\psi}}  \Biggr)   .
\end{multline*}
Now we use the pointwise estimate
\[
 \lvert \Op_c \eta \rvert^2
 \leqslant  3\vartheta_1^2 \lvert \Delta \eta \rvert^2
 +     3 \vartheta_1^2 (2d - 1)^2 \frac{\lvert \nabla \eta \rvert^2}{\lvert x \rvert^2}
 +    3 (\vartheta_2 d^2 + \lVert b \rVert_\infty)^2 \lvert \nabla \eta \rvert^2 ,
\]
see Appendix~\ref{sec:pointwise}, and obtain again by using the property~\eqref{eq:prop_eta} of the function $\eta$ that $I_2$ is bounded from above by
\begin{multline*}
 \left(\frac{4 \rho \mu_1 \sqrt{\vartheta_1}}{\delta}\right)^{2\alpha - 2}
 \left( \frac{M}{\delta}\right)^4
 \int_{B_1}
       \left[  \left(3 \vartheta_1^2 + \frac{768 \vartheta_1^2 d^2}{\delta^2}  + 3  (\vartheta_2 d^2 + \lVert b \rVert_\infty )^2 \right) \lvert \psi \rvert^2 + 4{\vartheta_1}  {\nabla\psi^\T A \nabla\overline{\psi}} \right]
       \\
 + \left( \mathrm{e} \mu \right)^{2\alpha - 2} \left( \frac{M}{D_0} \right)^4
 \int_{B_2}  \left[  \left(3 \vartheta_1^2 +  \frac{12 \vartheta_1^2 d^2}{(2 \mathrm{e} \vartheta_1 R)^2} + 3 (\vartheta_2 d^2 + \lVert b \rVert_\infty )^2\right) \lvert \psi \rvert^2 + 4 {\vartheta_1}  {\nabla\psi^\T A \nabla\overline{\psi}}  \right]  .
\end{multline*}
An application of Lemma~\ref{lemma:Cacciopoli} with $r_1 = \delta / 4$, $r_2 = \delta / 2$ and $r = \delta / 2$ for the first summand and $r_1 = 2 \mathrm{e} \vartheta_1  R$, $r_2 = 2 \mathrm{e} \vartheta_1  R + D_0$ and $r = D_0 / 2$ for the second summand gives by using $D_0 \geqslant \delta$ and $2 \mathrm{e} \vartheta_1 R \geqslant \delta$
\begin{multline} \label{eq:I2upper}
 I_2 \leqslant
       \left(\frac{4 \rho \mu_1 \sqrt{\vartheta_1}}{\delta}\right)^{2\alpha - 2} \left( \frac{M}{\delta}\right)^4
       \left[ 3 \vartheta_1^2 + \frac{768 \vartheta_1^2 d^2}{\delta^2} + 3(\vartheta_2 d^2 + \lVert b \rVert_\infty )^2   + 4{\vartheta_1}  \Cac_{\delta / 2} \right]       \lVert \psi \rVert_{B(\delta)}^2 \\
 +  \left( \mathrm{e} \mu \right)^{2\alpha - 2} \left( \frac{M}{D_0} \right)^4 \left[ 3 \vartheta_1^2 + \frac{3\vartheta_1^2 d^2}{(2 \mathrm{e} \vartheta_1 R)^2} + 3 (\vartheta_2 d^2 + \lVert b \rVert_\infty )^2  + 4 {\vartheta_1}  \Cac_{D_0 / 2}  \right] \lVert \psi \rVert_\Omega^2  \\
 +  \left(\frac{4\rho \mu_1 \sqrt{\vartheta_1}}{\delta} \right)^{2\alpha - 2}\left( \frac{M}{\delta} \right)^4
 16{\vartheta_1} \lVert \zeta \rVert^2_{\Omega} .
\end{multline}
where $\Cac_{t} = \Cac_t (V , b , c , \vartheta_1)$, $t > 0$, is defined in Lemma~\ref{lemma:Cacciopoli}.
Next we want to use
\begin{multline*}
 \left( \mathrm{e} \mu \right)^{2\alpha - 2} \left( \frac{M}{D_0} \right)^4 \left[ 3 \vartheta_1^2 + \frac{3\vartheta_1^2 d^2}{(2 \mathrm{e} \vartheta_1 R)^2} + 3 (\vartheta_2 d^2 + \lVert b \rVert_\infty )^2  + 4 {\vartheta_1}  \Cac_{D_0/2}  \right] \lVert \psi \rVert_\Omega^2 \\ \leqslant  \frac{3}{24} \frac{\alpha^3}{C \rho^4}  \left( \frac{\rho}{\sqrt{\vartheta_1} R } \right)^{1+2\alpha} \lVert \psi \rVert_\Theta^2  .
\end{multline*}
by choosing $\alpha$ sufficiently large. Since $\lVert \psi \rVert_\Omega^2 / \lVert \psi \rVert_\Theta^2 \leqslant \beta$, this is satisfied if
\begin{multline} \label{eq:alpha2}
 \alpha^3  \left( \frac{\rho}{\sqrt{\vartheta_1} \mathrm{e} R \mu} \right)^{2\alpha} \\  \geqslant
 \frac{8 C \rho^3 \sqrt{\vartheta_1} R \beta}{\mathrm{e}^2 \mu^2}
 \left( \frac{M}{D_0} \right)^4
 \left[ 3 \vartheta_1^2 + \frac{3\vartheta_1^2 d^2}{(2 \mathrm{e} \vartheta_1 R)^2} + 3 (\vartheta_2 d^2 + \lVert b \rVert_\infty )^2  + 4 {\vartheta_1}  \Cac_{D_0 / 2}  \right].
\end{multline}
Since we want to verify Ineq.~\eqref{eq:alpha2} by choosing $\alpha$ sufficiently large, we now argue that $\rho / (\sqrt{\vartheta_1} \mathrm{e} \allowbreak R \mu) > 1$. Indeed, by our assumption $\epsilon_0 = 1 - 33 \mathrm{e} d \vartheta_1^6 \vartheta_2 R > 0$ we have
\[
 \mu = 33 d \vartheta_1^{11/2} \vartheta_2 \rho + \frac{\rho \epsilon_0}{2 \sqrt{\vartheta_1} \mathrm{e} R} = \frac{33}{2} d \vartheta_1^{11/2} \vartheta_2 \rho +  \frac{\rho}{2 \sqrt{\vartheta_1} \mathrm{e} R} <  \frac{\rho}{\sqrt{\vartheta_1} \mathrm{e} R} .
\]
Hence,
\[
 \frac{\rho}{\sqrt{\vartheta_1} \mathrm{e} R \mu} > 1 ,
\]
which ensures that there exists $\alpha_3$ such an Ineq.~\eqref{eq:alpha2} is satisfied for all $\alpha \geqslant \max\{ 1, \alpha_3\}$. A possible choice of $\alpha_3$ is
\begin{multline}
 \alpha_3 :=  \left(2 \ln \left( \frac{\rho}{\sqrt{\vartheta_1} \mathrm{e} R \mu}  \right) \right)^{-1}  \ln \left( \frac{8 C \rho^3 \sqrt{\vartheta_1} R \beta}{\mathrm{e}^2 \mu^2}
 \left( \frac{M}{D_0} \right)^4  \right.
 \\
  \left. 
  \cdot
  \left[ 3 \vartheta_1^2 + \frac{3\vartheta_1^2 d^2}{(2 \mathrm{e} \vartheta_1 R)^2} + 3( \vartheta_2 d^2 + \lVert b \rVert_\infty )^2 + 4 {\vartheta_1} \Cac_{D_0 / 2}  \right] \right) .
\end{multline}
Hence, 
we can subsume the second summand of the right hand side of Ineq.~\eqref{eq:I2upper} into the lower bound of Ineq.~\eqref{eq:I2lower} and obtain by using Ineq.~\eqref{eq:later} and $M \geqslant 1$ for all $\alpha \geqslant \max\{\alpha_0 , \alpha_1 , \alpha_2 , \alpha_3\}$
\begin{multline*} 
 \frac{1}{12} \frac{\alpha^3}{C \rho^4}  \left( \frac{\rho}{\sqrt{\vartheta_1} R} \right)^{1+2\alpha} \lVert \psi \rVert_\Theta^2
 \\
 \leqslant
 \left(\frac{4 \rho \mu_1 \sqrt{\vartheta_1}}{\delta}\right)^{2\alpha - 2} \left( \frac{M}{\delta}\right)^4
       \left[ 3 \vartheta_1^2 + \frac{768 \vartheta_1^2 d^2}{\delta^2} + 3(\vartheta_2 d^2 + \lVert b \rVert_\infty )^2   + 4{\vartheta_1} \Cac_{\delta / 2} \right]       \lVert \psi \rVert_{B(\delta)}^2 \\
       + \left(\frac{4\rho \mu_1 \sqrt{\vartheta_1}}{\delta} \right)^{2\alpha - 2}\left( \frac{M}{\delta} \right)^4
 18\vartheta_1^2 \lVert \zeta \rVert^2_{\Omega}  .
\end{multline*}
Now we use the lower bound $\alpha^3 \geqslant 1$ and
\[
 \frac{18\vartheta_1^2}{ 3 \vartheta_1^2 + \frac{768 \vartheta_1^2 d^2}{\delta^2} + 3(\vartheta_2 d^2 + \lVert b \rVert_\infty )^2   + 4{\vartheta_1} \Cac_{\delta / 2} } \leqslant \delta^2 ,
\]
and obtain the statement of the theorem with
\begin{equation} \label{eq:constant_qUC}
 C_\qUC := \frac{4 \mu_1^2 \sqrt{\vartheta_1} \delta^{2} (3 R \rho C M^4 )^{-1}}{3 \vartheta_1^2 + 768 \vartheta_1^2 d^2 \delta^{-2} + 3(\vartheta_2 d^2 + \lVert b \rVert_\infty )^2   + 4{\vartheta_1}  \Cac_{\delta / 2}} \left( \frac{\delta}{4 \mu_1  \vartheta_1 R}  \right)^{2\alpha^*}
\end{equation}
where $\alpha^* := \max\{\alpha_0 , \alpha_1 , \alpha_2 , \alpha_3 \}$.
\end{proof}
%
%
%
%
%
\subsection{Proof of Theorem~\ref{thm:sampling} and Theorem~\ref{cor:sampling}}
We follow \cite[Proof of Theorem~2.1]{Rojas-MolinaV-13}.
For $L > 0$ and $x \in \mathbb{R}^d$ we denote by $\Lambda_L(x) = (-L/2 , L/2)^d + x$ the cube of side length $L$ centered at $x$, and for $a \in \mathbb{R}$ by $\lceil a\rceil$ the smallest integer larger than or equal to $a$.
\begin{proof}[Proof of Theorem~\ref{thm:sampling}]
Fix $\psi \in W^{2,2} (\mathbb{R}^d)$ and $\zeta \in L^2 (\mathbb{R}^d)$. We say that a site $k \in \ZZ^d$ is dominating if
\begin{equation}\label{2.3.1}
 \int_{\Lambda_1 (k)} \lvert \psi \rvert^2 \geqslant \frac{1}{2T^d} \int_{\Lambda_T (k)} \lvert \psi \rvert^2 \quad \text{with} \quad T = \left\lceil 2 (\sqrt{d} + 2) \left(2\mathrm{e} \vartheta_1 + 1 \right) \right\rceil,
\end{equation}
and otherwise weak. We denote by $W \subset \mathbb{R}^d$ the union of unit cubes centered at weak sites and by $D \subset \mathbb{R}^d$ the union of unit cubes centered at dominating sites. Then
\begin{align}
 \int_W \lvert \psi \rvert^2 = \sum_{\genfrac{}{}{0pt}{2}{k \in \ZZ^d:}{\text{$k$ is weak}}} \int_{\Lambda_1 (k)} \lvert \psi \rvert^2
 &<
 \frac{1}{2T^d} \sum_{\genfrac{}{}{0pt}{2}{k \in \ZZ^d:}{\text{$k$ is weak}}} \int_{\Lambda_T (k)} \lvert \psi \rvert^2 \nonumber \\
 &\leqslant
  \frac{1}{2T^d} \sum_{k\in\ZZ^d} \int_{\Lambda_T (k)} \lvert \psi \rvert^2
 \leqslant
 \frac{T^d}{2T^d} \int_{\mathbb{R}^d} \lvert \psi \rvert^2
 =
 \frac{1}{2} \int_{\mathbb{R}^d} \lvert \psi \rvert^2 . \label{2.3.2}
\end{align}
Since $D$ is the complement of $W$ in $\mathbb{R}^d$, we have
\begin{equation}\label{2.3.3}
 2 \int_D \lvert \psi \rvert^2 > \int_{\mathbb{R}^d} \lvert \psi \rvert^2 .
\end{equation}
For a dominating site $k \in \ZZ^d$ we define its right near-neighbor by
\[
 k^+ = k+ 2 \mathbf{e}_1
 \quad \text{where} \quad
 \mathbf{e}_1 = (1,0,\ldots,0)^{\mathrm{T}} \in \mathbb{R}^d.
\]
For each dominating site $k \in \ZZ^d$ we want to apply Theorem~\ref{thm:qUC} with
\begin{align*}
\beta = 2T^d, \quad
 R = \sqrt{d} + 2, \quad
 D_0 = \frac{R}{2}, \quad
 \Omega = \Lambda_T (k), \quad
 x = z_{k^+}  \quad \text{and} \quad
 \Theta = \Lambda_1 (k) .
\end{align*}
Therefore, we have to check whether the assumptions of Theorem~\ref{thm:qUC} are satisfied for these specific choices. Assumption~\eqref{ass:qUC} of Theorem~\ref{thm:qUC} is satisfied by Assumption~\eqref{ass:samplingG=1}. Note that $\Theta =  \Lambda_1(k)$ is obviously disjoint from the open ball $B(z_{k^+},\delta / 2)$, and there exists 
$a \in \Lambda_1(k)$ with $\lvert a-z_{k^+} \rvert_2=2$. Thus, for each $b \in \Lambda_1(k)$ we have
$\lvert b-z_{k^+}\rvert_2\leqslant \lvert b-a \rvert_2 + \lvert a-z_{k^+}\rvert_2\leqslant \sqrt{d}+ 2 = R$. Hence,
$\Theta \subset \overline{B (z_{k^+},R)} \setminus B (z_{k^+},\delta / 2)$. In order to verify $B (x , 2 \mathrm{e} \vartheta_1 R + 2D_0) \subset \Omega$,
we note that for each $y \in B (z_{k^+} , 2 \mathrm{e} \vartheta_1 R + 2D_0)$ we have
\begin{align*}
 \lvert k - y \lvert &\leqslant \lvert k-k^+  \rvert + \lvert k^+ - z_{k^+} \rvert + \lvert z_{k^+} - y  \rvert < 2 + \sqrt{d}/2 + (a+1)R
  \leqslant  T / 2 ,
\end{align*}
and therefore, $B (z_{k^+} , 2 \mathrm{e} \vartheta_1 R + 2D_0) \subset B (k , T/2) \subset \Lambda_T (k)$.  Finally, $\lVert \psi \rVert^2_\Omega / \lVert \psi \rVert_\Theta^2 \leqslant \beta$ since $k$ is dominating.
Thus, for every dominating site $k$ Theorem~\ref{thm:qUC} gives
\[
 \lVert \psi \rVert_{B (z_{k^+} , \delta)}^2 + \delta^2 \lVert \zeta \rVert_{\Omega}^2  \geqslant C_{\qUC}  \lVert \psi \rVert_{\Lambda_1 (k)}^2
\]
where $C_{\qUC}$ is given in Theorem~\ref{thm:qUC} with the above choices of the parameters. Since $B (z_{k^+},\delta) \subset \Lambda_1 (k^+)$ for all $k \in \ZZ^d$, we obtain by summing over all dominating $k \in \ZZ^d$
\begin{equation*}
\lVert \psi \rVert_{\equi_\delta}^2 + T^d \delta^2 \lVert \zeta \rVert_{\mathbb{R}^d}^2
\geqslant C_{\qUC} \lVert \psi \rVert_D^2 > \frac{C_{\qUC}}{2} \lVert \psi \rVert_{\mathbb{R}^d}^2 .
\end{equation*}
From Lemma~\ref{lemma:constants} we infer a lower bound on $C_{\qUC} / (2T^d)$ leading to the stated constant $c_{\sfUC}$.
\end{proof}
The proof of Theorem~\ref{thm:samplingG} is postponed to Appendix~\ref{app:scaling}.
\begin{proof}[Proof of Theorem~\ref{cor:sampling}]
We follow \cite[Proof of Theorem~1.1]{Klein-13}. Set $V \equiv E$ and $\zeta = (H - E) \psi$. Then the assumption $\lvert H \psi \rvert \leqslant \lvert E \psi \rvert + \lvert (H - E) \psi \rvert$ of Theorem~\ref{thm:samplingG}
is satisfied by the triangle inequality.
By using $\lVert (H-E) \psi \rVert^2 \leqslant \gamma^2 \lVert \psi \rVert^2$, we obtain the inequality
\begin{equation*}
 C_\sfUC \lVert \psi \rVert_{\mathbb{R}^d}^2
 \leqslant
 \lVert \psi \rVert_{\equi_{\delta}}^2
 +
 \delta^2 G^2 \lVert (H- E) \psi \rVert_{\mathbb{R}^d}^2
  \leqslant
 \lVert  \psi \rVert_{\equi_\delta}^2
 +
 \delta^2 G^2 \gamma^2 \lVert \psi \rVert_{\mathbb{R}^d}^2 .
\end{equation*}
The result follows, since $\delta < G/2$ and $C_\sfUC - \delta^2 G^2 \gamma^2 \geqslant (3/4) C_\sfUC$.
\end{proof}
\subsection{Proof of Theorem~\ref{thm:L:per} to \ref{cor:L:Dir}}
\label{sec:proofL}
\begin{proof}[Proof of Theorem~\ref{thm:L:per} and \ref{thm:L:Dir}]
Here we follow the main lines of the proof of Theorem~\ref{thm:sampling} up to certain minor changes.
This is due to the fact that the function $\psi$ is defined just on $\Lambda_L$ instead of $\mathbb{R}^d$.
We assume $G = 1$. The general case $G > 0$ follows by scaling, similar as explained in Appendix~\ref{app:scaling}.
We consider the differential expression
\[
 \Op_L := -\diver (A \nabla u) + b^\T \nabla u + c u = -\sum_{i,j=1^d} \partial_i \left( a^{ij} \partial_j u \right) + \sum_{i=1}^d b_i \partial_i u + c u ,
\]
with coefficient functions $A = (a^{ij})_{i,j=1}^d : \Lambda_L \to \mathbb{R}^{d \times d}$, $b : \Lambda_L \to \mathbb{C}^d$, and $c : \Lambda_L \to \mathbb{C}$.
Our assumption on $\psi$ is equivalent to $\lvert \Op_L \psi \rvert \leqslant \lvert V \psi \rvert + \lvert \zeta \rvert$ almost everywhere on $\Lambda_L$.
We want to extend $\psi$, $V$, $\zeta$ and the coefficients of $\Op_L$ in such a way, that the same inequality holds almost everywhere on $\mathbb{R}^d$.
\par
In the case of Theorem~\ref{thm:L:per}, i.e.\ periodic boundary conditions, we extend the function $\psi$ $L$-periodically in each direction. Namely, let $\mathbf{e}_p= (0,\ldots,0,1,0,\ldots,0)^{\mathrm{T}} \in \mathbb{R}^d$, $p=1,\ldots,d$, be the standard basis in $\mathbb{R}^d$, then
\begin{equation*}
\psi(x+L\mathbf{e}_p)=\psi(x).
\end{equation*}
In the same way we extend all the coefficients of $\Op_L$, the potential $V$ and the function $\zeta$.
\par
In the case of Theorem~\ref{thm:L:Dir}, i.e.\ Dirichlet boundary conditions, the extensions are different and made by symmetric and antisymmetric reflections with respect to the sides of $\Lambda_L$.
Namely, assume that functions $\psi$, $V$, $\zeta$, $a^{ij}$, $b_i$, $c$ are defined on $\Lambda_L(m)$, $m=(m_1,\ldots,m_d)\in L \ZZ^d$. Then the above functions are extended on the neighboring boxes $\Lambda_L(m\pm L\mathbf{e}_p)$, $p=1,\ldots,d$, as follows:
\begin{align*}
&\psi\big(x\pm L\mathbf{e}_p\big)=-\psi\big(x+2(m_p - x_p)\mathbf{e}_p\big),
\\
&a^{ij}\big(x\pm L\mathbf{e}_p\big)=a^{ij}\big(x+2(m_p -x_p)\mathbf{e}_p\big)\quad\text{if}\quad i\not=p,\quad j\not=p,
\\
&a^{pp}\big(x\pm L\mathbf{e}_p\big)=a^{pp}\big(x+2(m_p -x_p)\mathbf{e}_p\big),
\\
&a^{pj}\big(x\pm L\mathbf{e}_p\big)=a^{jp}\big(x\pm L\mathbf{e}_p\big)= -a^{pj}\big(x+2(m_p -x_p)\mathbf{e}_p\big)\quad \text{if}\quad p\not=j,
\\
&b_i\big(x\pm L\mathbf{e}_p\big)=-b_i\big(x+2(m_p -x_p)\mathbf{e}_p\big) \quad\text{if}\quad i\not=p,
\\
&b_p\big(x\pm L\mathbf{e}_p\big)=b_p\big(x+2(m_p -x_p)\mathbf{e}_p\big),
\\
& c\big(x\pm L\mathbf{e}_p\big)=c\big(x+2(m_p -x_p)\mathbf{e}_p\big),
\\
&V\big(x\pm L\mathbf{e}_p\big)=V\big(x+2(m_p -x_p)\mathbf{e}_p\big),
\\
&\zeta\big(x\pm L\mathbf{e}_p\big)=\zeta\big(x+2(m_p -x_p)\mathbf{e}_p\big) .
\end{align*}
Due to the assumption (Per) or (Dir)  and the corresponding boundary condition of $\psi$, the extended $\psi$ is locally in $W^{2,2} (\mathbb{R}^d)$, satisfies $\lvert \Op_L \psi \rvert \leqslant \lvert V \psi \rvert + \lvert \zeta \rvert$ almost everywhere on $\mathbb{R}^d$, and the coefficients of $\Op$ satisfy the ellipticity and Lipschitz condition \eqref{eq:elliptic}. Moreover, by construction of our extension of $\psi$ we have
\begin{equation}\label{2.4.1}
\sum_{k \in \Lambda_L \cap \ZZ^d} \int_{\Lambda_T (k)} \lvert \psi \rvert^2  =T^d \int_{\Lambda_L} \lvert \psi \rvert^2  .
\end{equation}
Now we proceed as in the proof of Theorem~\ref{thm:sampling}. We define dominating sites as in \eqref{2.3.1}. Instead of $W$, we introduce set $W(L)$ as the union of all unit cubes centered at weak sites $k$ located inside $\Lambda_L$.
By $D(L)$ we denote the union of unit cubes centered at the dominating sites located inside $\Lambda_L$. Then employing \eqref{2.4.1}, in the same way as in \eqref{2.3.2} we get
\begin{align*}
 \int_{W(L)} \lvert \psi \rvert^2  =  \sum_{\genfrac{}{}{0pt}{2}{k \in \ZZ^d\cap\Lambda_L:}{\text{$k$ is weak}}} \int_{\Lambda_1 (k)} \lvert \psi \rvert^2
 &<
 \frac{1}{2T^d} \sum_{\genfrac{}{}{0pt}{2}{k \in \Lambda_L\cap\ZZ^d:}{\text{$k$ is weak}}} \int_{\Lambda_T (k)} \lvert \psi \rvert^2
 \\
 &\leqslant
  \frac{1}{2T^d} \sum_{ k \in \Lambda_L\cap\ZZ^d} \int_{\Lambda_T (k)} \lvert \psi \rvert^2
 =
 \frac{1}{2}\int_{\Lambda_L} \lvert\psi\rvert^2.
\end{align*}
Hence, we arrive at the analogue of estimate \eqref{2.3.3}:
\begin{equation*}
2\int_{D(L)} \lvert \psi \rvert^2 > \int_{\Lambda_L} \lvert \psi \rvert^2 .
\end{equation*}
Now all other arguments in the proof of Theorem~\ref{thm:sampling} are reproduced literally and we obtain for all dominating sites $k \in \Lambda_L \cap \ZZ^d$
\[
 \lVert \psi \rVert_{B (z_{k^+} , \delta)} + \delta^2 \lVert \zeta \rVert_{\Lambda_T (k)}^2 \geqslant C_{\qUC} \lVert \psi \rVert_{\Lambda_1 (k)}^2 ,
\]
where $C_{\qUC}$ is the constant from Theorem~\ref{thm:qUC} with $\beta = 2T^d$, $R = \sqrt{d} + 2$, and $D_0 = R / 2$. By summing over all dominating sites $k \in \Lambda_L \cap \ZZ^d$ and thanks to the oddness of $L$ and \eqref{2.4.1} we obtain
\begin{equation*}
\lVert \psi \rVert_{\equi_{\delta,L}}^2 + \delta^2 T^d \lVert \zeta \rVert_{\Lambda_T (k)}^2
\geqslant \!\!\!\!\!\! \sum_{\genfrac{}{}{0pt}{2}{k \in \ZZ^d\cap \Lambda_L:}{\text{$k$ is dominating}}} \!\!\!\!\!\! \Bigl( \lVert \psi \rVert_{B (z_{k^+} , \delta)}^2 + \delta^2 \lVert \zeta \rVert_{\Lambda_T (k)}^2 \Bigr)
\geqslant  C_{\qUC}
\lVert \psi \rVert_{D(L)}^2 > \frac{C_{\qUC}}{2}
\lVert \psi \rVert_{\Lambda_L}^2.
\end{equation*}
From Lemma~\ref{lemma:constants} we infer a lower bound on $C_{\qUC} / (2T^d)$ leading to the stated constant $C_{\sfUC}$.
\end{proof}
\begin{proof}[Proof of Theorem~\ref{cor:L:per} and \ref{cor:L:Dir}]
We follow \cite[Proof of Theorem~1.1]{Klein-13}. Depending on the boundary condition $\bullet \in \{\mathrm{per} , \mathrm{Dir}\}$, we set $V \equiv E$ and $\zeta = (H_L^\bullet \psi -E) \psi$.
Then the assumption $\lvert \Op \psi \rvert \leqslant \lvert E \psi \rvert + \lvert \zeta \rvert$ of Theorem~\ref{thm:L:per} and \ref{thm:L:Dir} is satisfied by the  triangle inequality.
By using $\lVert (H_L^\bullet-E) \psi \rVert^2 \leqslant \gamma^2 \lVert \psi \rVert^2$, we obtain the inequality
\begin{equation*}
 C_\sfUC \lVert \psi \rVert_{\Lambda_L}^2
 \leqslant
 \lVert \psi \rVert_{\equi_{\delta , L}}^2
 +
 (\delta / G)^2 G^2 \lVert (H - E )\psi \rVert_{\Lambda_L}^2
  \leqslant
 \lVert  \psi \rVert_{\equi_\delta}^2
 +
 (\delta / G)^2 G^2 \gamma^2 \lVert \psi \rVert_{\Lambda_L}^2 .
\end{equation*}
The result follows, since $\delta < G/2$ and $C_\sfUC - \delta^2 G^2 \gamma^2 \geqslant (3/4) C_\sfUC$.
\end{proof}
%
%
%
%
%
%
 \appendix
\section{An estimate for rotational symmetric functions}\label{sec:pointwise}
Let $\eta \in C_{\mathrm c}^\infty (\mathbb{R}^d)$, where $\eta = \zeta \circ \sigma$ with $\sigma (x) = \lvert x \rvert$ and $\zeta : \mathbb{R} \to [0,1]$ some profile function.
We want to estimate $(\Op \eta)^2$ pointwise. For the first and second derivatives we have
\[
\frac{\partial\eta}{\partial x_i} = \zeta' (\sigma) \frac{x_i}{\lvert x \rvert}, \quad \frac{\partial^2 \eta}{\partial x_i \partial x_j} = \zeta'' (\sigma) \frac{x_i x_j}{\lvert x \rvert^2} + \zeta' (\sigma) \left( \frac{\delta_{ij}}{\lvert x \rvert} - \frac{x_i x_j}{\lvert x \rvert^3} \right) .
\]
For the gradient and the Laplacian of $\eta$ there holds
\[
 \lvert \nabla \eta \rvert = \lvert \zeta' (\sigma) \rvert  \quad \text{and} \quad \Delta \eta = \zeta'' (\sigma) + \zeta' (\sigma) \frac{d-1}{\lvert x \rvert} .
\]
First we estimate $\lvert \Op_0 \eta \rvert := \lvert -\diver ( A \nabla \eta) \rvert$:
\begin{align*}
 \lvert \Op_0 \eta \rvert &\leqslant \left\lvert \sum_{i,j}  a^{ij} \left(  \zeta'' (\sigma) \frac{x_i x_j}{\lvert x \rvert^2} + \zeta' (\sigma) \left( \frac{\delta_{ij}}{\lvert x \rvert} - \frac{x_i x_j}{\lvert x \rvert^3} \right) \right) \right\rvert + \left\lvert \sum_{i,j} \left(\partial_i a^{ij} \right) \zeta' (\sigma) \frac{x_j}{\lvert x \rvert} \right\rvert \\
 &=: S_1 + S_2
\end{align*}
For the summand $S_2$ we have
\[
 S_2 \leqslant \vartheta_2 \lvert \zeta' (\sigma) \rvert \sum_{i,j}  \frac{\lvert x_j \rvert}{\lvert x \rvert} \leqslant \vartheta_2 d^2 \lvert \zeta' (\sigma) \rvert = \vartheta_2 d^2 \lvert \nabla \eta \rvert .
\]
For the first summand $S_1$ we have
\begin{align*}
 S_1 &\leqslant \left\lvert\sum_{i,j} a^{ij}  \zeta'' (\sigma) \frac{x_i x_j}{\lvert x \rvert^2} \right\rvert + \left\rvert \sum_{i,j} a^{ij} \zeta' (\sigma) \left( \frac{\delta_{ij}}{\lvert x \rvert} - \frac{x_i x_j}{\lvert x \rvert^3}\right) \right\rvert
\end{align*}
By Assumption~\ref{eq:elliptic} we have $\sum_{i,j} a^{ij} x_i x_j \leqslant \vartheta_1 \lvert x \rvert^2$. This gives
\begin{align*}
 S_1 & \leqslant \vartheta_1 \lvert \zeta'' (\sigma) \rvert + \frac{\lvert \zeta' (\sigma) \rvert}{\lvert x \rvert} \left\rvert \sum_{i,j} a^{ij}  \left( \delta_{ij} - \frac{x_i x_j}{\lvert x \rvert^2}\right) \right\rvert  .
\end{align*}
By Assumption~\ref{eq:elliptic} we have $\vartheta_1^{-1} \leqslant a^{ii} \leqslant \vartheta_1$ for all $i \in \{1,\ldots , d\}$ and $\vartheta_1^{-1} \leqslant \lvert x \rvert^{-2} \sum_{i,j} a^{ij} x_i x_j \leqslant \vartheta_1$. Hence,
\begin{align*}
 S_1 &\leqslant \vartheta_1 \lvert \zeta'' (\sigma) \rvert + \frac{\lvert \zeta' (\sigma) \rvert}{\lvert x \rvert}  d \vartheta_1 \\
 & = \vartheta_1  \left \lvert \Delta \eta - \zeta' (\sigma) \frac{d-1}{\lvert x \rvert} \right\rvert + \lvert \nabla \eta \rvert \frac{d \vartheta_1}{\lvert x \rvert} \\
 & \leqslant \vartheta_1  \vert \Delta \eta \rvert + \vartheta_1 \lvert \nabla \eta \rvert \frac{d-1}{\lvert x \rvert} + \lvert \nabla \eta \rvert \frac{d \vartheta_1}{\lvert x \rvert} \\
 &= \vartheta_1 \lvert \Delta \eta \rvert + \vartheta_1 \frac{\lvert \nabla \eta \rvert}{\lvert x \rvert} (2d-1) .
\end{align*}
Putting everything together, 
 we obtain
\[
 \lvert \Op_0 \eta \rvert
 \leqslant  \vartheta_1 \vert \Delta \eta \rvert
 +     \vartheta_1 (2d - 1) \frac{\lvert \nabla \eta \rvert}{\lvert x \rvert}
 +     \vartheta_2 d^2 \lvert \nabla \eta \rvert .
\]
Hence,
\[
 \lvert \Op_c \eta \rvert
 \leqslant \lvert \Op_0 \eta \rvert + \lvert b^\T \nabla \eta \rvert
 \leqslant  \vartheta_1 \vert \Delta \eta \rvert
 +     \vartheta_1 (2d - 1) \frac{\lvert \nabla \eta \rvert}{\lvert x \rvert}
 +     (\vartheta_2 d^2 + \lVert b \rVert_\infty) \lvert \nabla \eta \rvert
\]
and
\[
 \lvert \Op_c \eta \rvert^2
 \leqslant  3\vartheta_1^2 \lvert \Delta \eta \rvert^2
 +     3 \vartheta_1^2 (2d - 1)^2 \frac{\lvert \nabla \eta \rvert^2}{\lvert x \rvert^2}
 +    3 (\vartheta_2 d^2 + \lVert b \rVert_\infty)^2 \lvert \nabla \eta \rvert^2  .
\]
\section{The constant $C_{\qUC}$} \label{sec:CqUC}
We derive an explicit bound on $C_{\mathrm{qUC}}$ in the special case $2D_0 = R \geqslant 1$, and $\delta \leqslant 2$.
In this special case we have $\epsilon_0 \in (0,1]$, $\mu \in (\sqrt{\vartheta_1} , (5/2) \sqrt{\vartheta_1})$, $\mu_1 \in (1,(5/2) \mathrm{e}\vartheta_1)$, $\rho = (2 \mathrm{e} \vartheta_1+1)R \leqslant (2 \mathrm{e} + 1)\vartheta_1 R$, and $C_\mu \in ( \sqrt{\vartheta_1} \epsilon_0 , (3/2) \sqrt{\vartheta_1} \epsilon_0)$.
By $K$ we denote constants depending only on the dimension which may change from line to line. For the constant $C$ from the Carleman estimate we have the upper bound
\[
 C
 \leqslant
 K \epsilon_0^{-1} \vartheta_1^{13} \mathrm{e}^{10\vartheta_1} (1+\vartheta_2) R.
\]
For the constant $\Cac_{\delta / 2}$ we have by using $\delta \leqslant 2$ and $x \leqslant 1+x^2$
\[
  \Cac_{\delta / 2} \leqslant \frac{3 + 2 \vartheta_1 (\lVert V \rVert_\infty^2 + \lVert b \rVert_\infty^2 + \lVert c \rVert_\infty^2) + 8 \vartheta_1 C'}{\delta^2 / 4} \leqslant K \frac{\vartheta_1}{\delta^2} \left( 1 + \lVert V \rVert_\infty^2 + \lVert b \rVert_\infty^2 + \lVert c \rVert_\infty^2 \right).
 \]
 Hence, for the term
 \[
  T_1 = \frac{4 \mu_1^2 \sqrt{\vartheta_1} \delta^{2} (3 R \rho C M^4 )^{-1}}{3 \vartheta_1^2 + 768 \vartheta_1^2 d^2 \delta^{-2} + 3(\vartheta_2 d^2 + \lVert b \rVert_\infty )^2   + 4{\vartheta_1}  \Cac_{\delta / 2}}
 \]
 in $C_{\mathrm{qUC}}$, we obtain the lower bound
 \begin{align*}
  T_1 &\geqslant
       \frac{K\vartheta_1^{-31/2} \mathrm{e}^{-10 \sqrt{\vartheta_1}} \epsilon_0 \delta^4}{R^3 (1+\vartheta_2)^3 (1 + \lVert V \rVert_\infty^2 + \lVert b \rVert_\infty^2 + \lVert c \rVert_\infty^2)} .
 \end{align*}
 For the constant $C_{\mathrm{qUC}}$ we obtain using $\delta^4 / R^3 \geqslant (\delta / (10 \mathrm{e} \vartheta_1^{2} R))^4$, $(1+x)^{-1} = (\mathrm{e}^{-1})^{\ln (1+x)}$ and $\ln (1+x) \leqslant 3x^{1/3}$ with $x = \lVert V \rVert_\infty^2 + \lVert b \rVert_\infty^2 + \lVert c \rVert_\infty^2$
\begin{equation*}
 C_{\mathrm{qUC}}
      \geqslant  \frac{K\vartheta_1^{-31/2} \mathrm{e}^{-10\vartheta_1} \epsilon_0}{(1+\vartheta_2)^3}
      \left( \frac{\delta}{10 \mathrm{e} \vartheta_1^{2} R} \right)^{4+2\alpha^*+3(\lVert V \rVert_\infty^2 + \lVert b \rVert_\infty^2 + \lVert c \rVert_\infty^2)^{1/3}} .
\end{equation*}
Next we give an upper bound on $\alpha^*$. For $\tilde \alpha_0$ we have
\[
 \tilde \alpha_0 \leqslant K \vartheta_1^{25} \mathrm{e}^{15 \vartheta_1}(1+\vartheta_2)^2 R^2 \epsilon_0^{-1} .
\]
Hence,
\[
 \alpha_0 \leqslant K \vartheta_1^{25} \mathrm{e}^{15 \vartheta_1} (1+ \vartheta_2)^2 \epsilon_0^{-1} R^3 (1+ \lVert b \rVert_\infty^2 + \lVert c \rVert_\infty^{2/3})  .
\]
For $\alpha_1$ we have
 \[
  \alpha_1 \leqslant K
  \vartheta_1^{37/6} \mathrm{e}^{10 \vartheta_1 / 3} (1+\vartheta_2)^{1/3} \epsilon_0^{-1/3}
  \left( \lVert V \rVert_\infty^2 R^5 \right)^{1/3} .
 \]
Since
\[
 \frac{\sqrt{\vartheta_1} \mathrm{e} R \mu}{\rho} = 33 d \mathrm{e} R \vartheta_1^6 \vartheta_2 + \epsilon_0 / 2 = 1 - \epsilon_0 / 2
\]
we find
\[
 \ln \left( \frac{\rho}{\sqrt{\vartheta_1} \mathrm{e} R \mu} \right) = \ln \left( \frac{1}{1- \epsilon_0 / 2} \right) \geqslant \epsilon_0 / 2.
\]
Hence, using $F_{D_0 / 2} \leqslant K \vartheta_1^2 (1 + \lVert V \rVert_\infty^2 + \lVert b \rVert_\infty^2 + \lVert c \rVert_\infty^2)$ we obtain as an upper bound for $\alpha_3$
\[
 \alpha_3 \leqslant \epsilon_0^{-1} \ln \left(K \beta R {\vartheta_1^{19}} \mathrm{e}^{10 \vartheta_1} (1+\vartheta_2)^3 \epsilon_0^{-1} \left( \lVert V \rVert_\infty^2 + \lVert b \rVert_\infty^2 + \lVert c \rVert_\infty^2 + 1 \right) \right) .
\]
Hence,
\begin{multline*}
 \alpha^* \leqslant 1 + K \vartheta_1^{25} \mathrm{e}^{15 \vartheta_1} (1 + \vartheta_2)^2 \epsilon_0^{-1} \bigl(1 + \lVert b \rVert_\infty^{2} + \lVert c \rVert_\infty^{2/3} + \lVert V \rVert_\infty^{2 / 3} \bigr) R^{3} \\
 + \epsilon_0^{-1} \ln \left(K \beta R {\vartheta_1^{19}} \mathrm{e}^{10 \vartheta_1}  (1+\vartheta_2)^3 \epsilon_0^{-1} \left( \lVert V \rVert_\infty^2 + \lVert b \rVert_\infty^2 + \lVert c \rVert_\infty^2 + 1 \right) \right) .
\end{multline*}
We now use $\ln (1+x) \leqslant 3x^{1/3}$ for $x \geqslant 0$, $(\sum a_i)^{1/3} \leqslant \sum a_i^{1/3}$ and $\epsilon_0 \geqslant (\delta / (10\mathrm{e}\vartheta_1^{2} R))^{-\ln \epsilon_0}$, and obtain
\begin{align*}
 C_{\mathrm{qUC}} &\geqslant \frac{K \vartheta_1^{-31/2} \mathrm{e}^{-10\vartheta_1}}{(1+\vartheta_2)^3}
 \left( \frac{\delta}{10 \mathrm{e} \vartheta_1^{2} R} \right)^{\gamma}
\end{align*}
where
\begin{align*}
 \gamma &= K \epsilon_0^{-1}  \vartheta_1^{25} \mathrm{e}^{15 \vartheta_1} (1+\vartheta_2)^2
 \left(
 1 + \lVert V \rVert_\infty^{2/3} + \lVert b \rVert_\infty^{2} + \lVert c \rVert_\infty^{2/3} \right) R^3 + \ln \beta - \ln \epsilon_0   .
\end{align*}
\section{Scaling argument (deduction of Theorem~\ref{thm:samplingG} from Theorem~\ref{thm:sampling})} \label{app:scaling}
Since we proved Theorem~\ref{thm:samplingG}, \ref{thm:L:per} and \ref{thm:L:Dir} only in the special case $G = 1$, we show in this appendix how the general case $G > 0$ can be obtained by scaling. We restrict this discussion to Theorem~\ref{thm:samplingG} for functions on $\mathbb{R}^d$.
The argument applies in the same way to Theorem~\ref{thm:L:per} and \ref{thm:L:Dir} for functions on the cube $\Lambda_L$.
\par
We fix a $(G,\delta)$-equidistributed sequence with $G > 0$ and $\delta \in (0,G/2)$. Let $g:\mathbb{R}^d \to \mathbb{R}^d$ be given by $g (x) = G x$ and $\tilde \psi = \psi \circ g$. We want to estimate
\[
 \int_{S_\delta} \lvert \psi \rvert^2 = G^d \int_{S_\delta / G} \lvert \tilde \psi \rvert^2
\]
from below. Note that $S_\delta / G$ corresponds to some $(1,\delta / G)$-equidistributed sequence. Let $\tilde a^{ij} = a^{ij} \circ g$, $\tilde b = G (b \circ g)$, $\tilde c = G^2(c \circ g)$, $\tilde V = G^2 (V \circ g)$, $\tilde \zeta = G^2 (\zeta \circ g)$ and
\[
 \tilde \Op = \sum_{i,j=1}^d \partial_i \tilde a^{ij} \partial_j + \tilde b^\T \nabla + \tilde c .
\]
Then, by the chain rule and our assumption $\lvert \Op \psi \rvert \leqslant \lvert V \psi \rvert + \lvert \zeta \rvert$, we have almost everywhere on $\mathbb{R}^d$ the inequality
\[
 \lvert \tilde \Op \tilde \psi \rvert
=  G^2  \lvert (\Op \psi)\circ g \rvert
\leqslant G^2 \lvert (V\psi) \circ g \rvert + G^2 \lvert \zeta \circ g \rvert
=  \lvert \tilde V \tilde \psi  \rvert +  \lvert \tilde \zeta \rvert .
\]
Let $\tilde A = (\tilde a^{ij})_{i,j=1}^d$ and $\tilde \vartheta_2 = G \vartheta_2$. The coefficients of $\tilde \Op$ satisfy for all $x \in \mathbb{R}^d$ and all $\xi \in \mathbb{R}^d$
\[
 \vartheta_1^{-1} \lvert \xi \rvert^2 \leqslant \xi^\T \tilde A (x) \xi \leqslant \vartheta_1 \lvert \xi \rvert^2, \quad
 \lVert \tilde A (x) - \tilde A (y) \rVert_\infty \leqslant \tilde \vartheta_2 \lvert x-y \rvert ,
\]
$\lVert \tilde b \rVert_\infty \leqslant G \lVert b \rVert_\infty$, $\lVert \tilde c \rVert_\infty \leqslant G^2 \lVert c \rVert_\infty$, and $\lVert \tilde V \rVert_\infty \leqslant G^2 \lVert V \rVert_\infty$. Hence we can apply Theorem~\ref{thm:sampling} to the functions $\tilde \psi$ and $\tilde \zeta$ and obtain
\[
  \lVert \psi \rVert_{S_\delta}^2 = G^d \lVert \tilde \psi \rVert_{S_\delta / G}^2
 \geqslant C_\sfUC G^d \lVert \tilde \psi \rVert_{\mathbb{R}^d}^2 - (\delta / G)^2 G^d \lVert \tilde \zeta \rVert_{\mathbb{R}^d}^2 = C_\sfUC \lVert \psi \rVert_{\mathbb{R}^d}^2 - (\delta / G)^2 G^4\lVert \zeta \rVert_{\mathbb{R}^d}^2 .
\]
\section*{Acknowledgments}

M.T.\ thanks Ivica Naki\'c for stimulating discussions, in particular concerning the smallness condition \eqref{ass:samplingG=1},
and kind hospitality at University of Zagreb where part of this work was done.
This visit was supported by the binational German-Croatian DAAD-MZOS project \emph{Scale-uniform controllability of partial differential equations}.
%
%
%

\end{document}